\theoremstyle{definition}\newtheorem{theorem}{Theorem}[section]
\theoremstyle{definition}
\theoremstyle{definition}\newtheorem{proposition}{Proposition}[section]
\theoremstyle{definition}
\theoremstyle{definition}
\theoremstyle{definition}\newtheorem{illustration}{Illustration}[section]
\theoremstyle{definition}
\newcommand{\gammaij}[2]{\gamma_{#1, \: #2}}
\newcommand{\Fij}[2]{F_{#1, \: #2}}
\newcommand{\Oij}[2]{\mathcal{O}_{#1, \: #2}}
\newcommand{\im}[2]{\text{Im}(#1, \: #2)}
\definecolor{lightyellow}{rgb}{1.0, 1.0, 0.6}
\renewcommand{\footnote}{\endnote}
\newcommand{\ignore}[1]{}\makeglossary
\begin{document}

\title{Geometric realizations of regular abstract polyhedra with automorphism group $H_3$}
%\shorttitle{Geometric realizations of regular abstract polyhedra}
\author{Jonn Angel L. Aranas and Mark L. Loyola}
%\cauthor[]{Mark L.}{Loyola}{mloyola@ateneo.edu}{address if different from \aff}
%\aff[]{Department of Mathematics, Ateneo de Manila University, Katipunan Avenue, Loyola Heights, Quezon City 1108, \country{Philippines}}
\footnotetext[1]{This work was supported by the School of Science and Engineering Industry 4.0 Research Grant (Project Code SI4-017) of Ateneo de Manila University. J.A.L. Aranas is grateful to the DOST-SEI ASTHRDP-NSC for his graduate scholarship grant.} 
\footnotetext[2]{Key words: regular abstract polyhedra, geometric realizations, non-crystallographic Coxeter group $H_3$, string C-groups}
\date{25th of October, 2019}
\maketitle

\begin{abstract}
    A \textit{geometric realization} of an abstract polyhedron $\mathcal{P}$ is a mapping $\rho : \mathcal{P} \to \mathbb{E}^3$ that sends an $i$-face to an open set of dimension $i$. This work adapts a method based on Wythoff construction to generate a full rank realization of a regular abstract polyhedron from its automorphism group $\Gamma$. The method entails finding a real orthogonal representation of $\Gamma$ of degree 3 and applying its image to suitably chosen open sets in space. To demonstrate the use of the method, we apply it to the abstract polyhedra whose automorphism groups are isomorphic to the non-crystallographic Coxeter group $H_3$.
\end{abstract}

\section{Introduction}
\label{sec:intro}

    A \textit{geometric polyhedron} is typically described as a three-dimensional solid of finite volume bounded by flat regions called its \textit{facets}. Well-known examples of geometric polyhedra include the five Platonic solids, which have been studied since antiquity, and their various \textit{truncations} and \textit{stellations} (Coxeter, 1973). Because of their mathematical and aesthetic appeal, geometric polyhedra are widely used as models in various fields of science and the arts (Senechal, 2013). In the field of crystallography, they have been used in studying the symmetry and structural formation of crystalline materials (Schulte, 2014; Delgado-Friedrichs \& O'Keefe, 2017), nanotubes (Cox \& Hill, 2009, 2011), and even viruses (Salthouse \textit{et al.}, 2015).

    In classical geometry, a facet is a convex or a star polygon bounded by line segments called \textit{edges} and corner points called \textit{vertices}. The facets enclose an open region in space called the polyhedron's \textit{cell}. Modern treatments of geometric polyhedra, however, relax these conditions and allow facets that are surrounded by skew or non-coplanar edges or facets that self-intersect, have holes, or have no defined interiors (Gr\"{u}nbaum, 1994; Johnson, 2008). In fact, there is no universally agreed upon definition of a geometric polyhedron. The definition a work uses usually depends on the author's particular preferences, requirements, and objectives.

    While there is no consensus on what constitutes a geometric polyhedron, mathematicians generally agree on the conditions one must impose on its underlying vertex-edge-facet-cell incidence structure. This set of conditions defines a related mathematical object called an \textit{abstract polyhedron}. Essentially, it is a partially ordered set of elements called \textit{faces} that play analogous roles to the vertices, edges, and facets of its geometric counterpart. Since an abstract polyhedron is combinatorial in nature, it is devoid of metric properties and is best described by its group of \textit{automorphisms} or incidence-preserving face mappings.

    To lay out the foundation for a more rigorous treatment of geometric polyhedra, Johnson (2008) proposed the concept of a \textit{real polyhedron} using an abstract polyhedron as blueprint. In his theory, a real polyhedron is the \textit{realization} or the resulting figure when the faces of an abstract polyhedron are mapped to open sets in space. These associated open sets are selected so that they satisfy a set of conditions pertaining to their boundaries and intersections. Although Johnson's definition may not satisfy everyone's requirements, anchoring it to a well-accepted concept makes it less ambiguous and more consistent with existing notions and theories.

    In this work, we shall adopt a simplified version of Johnson's real polyhedron for the definition of a geometric polyhedron. Our main objective is to adapt a method based on \textit{Wythoff construction} (Coxeter, 1973) to generate a geometric polyhedron from a given abstract polyhedron $\mathcal{P}$ satisfying a \textit{regularity} property. The adapted method builds the figure by applying the image of an orthogonal representation of the automorphism group of $\mathcal{P}$ to a collection of open sets in space. The method is formulated and stated in a way that is amenable to algorithmic computations and suited for computer-based graphics generation. This work extends and further illustrates the ideas found in the work of Clancy (2005) and concretizes the algebraic version of Wythoff construction found in McMullen \& Schulte (2002).

    To illustrate the use of the method, we apply it to the abstract polyhedra whose automorphism groups are isomorphic to the non-crystallographic Coxeter group $H_3$ (Humphreys, 1992). The group has order 120 and can be described via the group presentation
    \[
        H_3 =
        \left\langle
        {
            \begin{tabular}{c|c}
                \multirow{2}{*}{$s_0, s_1, s_2$} & $s_0^2 = s_1^2 = s_2^2 = e$, \\
                & $(s_0s_1)^3 = (s_1s_2)^5 = (s_0s_2)^2 = e$ \\
            \end{tabular}
        }
        \right\rangle.
    \]
    Being the group of symmetries of icosahedral structures, $H_3$ has played a fundamental role in the study of mathematical models of quasicrystals (Chen \textit{et al.}, 1998; Patera \& Twarock, 2002), carbon onions, carbon nanotubes (Twarock, 2002), and viruses (Janner, 2006; Keef \& Twarock, 2009).

\section{Regular abstract polyhedra and string C-groups}
\label{sec:regularAbstractPolyhedraStringC-groups}

    We begin with a non-empty finite set $\mathcal{P}$ of elements called \textit{faces} that are partially ordered by a binary relation $\leq$. Two faces $F$, $G$ in $\mathcal{P}$ are said to be \textit{incident} if either $F \leq G$ or $G \leq F$. The incidence relations among the faces can be graphically represented using a \textit{Hasse diagram} in which a face is represented by a node and two nodes are connected by an edge if the corresponding faces are incident (Fig. \ref{fig:pentagram}(\textit{a})). Since a partial order is transitive, we shall omit edges corresponding to implied incidences.

    Given faces $F \leq G$, we define the \textit{section} $G/F$ of $\mathcal{P}$ to be the set of all faces $H$ incident to both $F$ and $G$, that is, $G/F = \{H \in \mathcal{P} \mid F \leq H \leq G\}$. Note that a section is also a partially ordered set under the same binary relation.

    A \textit{flag} $\Phi$ of length $i \geq -1$ is a totally ordered maximal subset $F_{-1} \leq F_0 \leq \cdots \leq F_i$ consisting of $(i + 1)$ faces of $\mathcal{P}$. Two flags $\Phi$, $\Psi$ are said to be \textit{adjacent} if they differ at exactly one face. Finally, $\mathcal{P}$ is said to be \textit{flag-connected} if for every pair of flags $\Phi$, $\Psi$, there is a finite sequence $\Phi = \Phi_0, \: \Phi_1, \ldots, \: \Phi_k = \Psi$ of adjacent flags.
% no need to distinguish/clarify notation of flag as a subset vs list of ordered faces?
% "joined by a sequence of adjacent flags" is vague because verb "joined" is vague, or no?

    \subsection{Abstract polyhedra}
    \label{subsec:abstractPolyhedra}

    For our purposes, we shall now restrict our treatment to partially ordered sets $\mathcal{P}$ that satisfy the following three properties:

    \begin{enumerate}
        \item[(P1)] $\mathcal{P}$ contains a unique \textit{least face} and a unique     \textit{greatest face}.
        \item[(P2)] Each flag of $\mathcal{P}$ has length 4 or contains exactly $5$ faces including the least face and the greatest face.
        \item[(P3)] $\mathcal{P}$ is \textit{strongly flag-connected}. That is, each section of $\mathcal{P}$ is flag-connected.
    \end{enumerate}

    Properties P1 and P2 imply that any face $F$ belongs to at least one flag and that the number of faces, excluding the least face, preceding it in any flag is constant. This constant, which we assign to be $-1$ for the least face, is called the \textit{rank} of $F$. We shall call a face of rank $i$ an \textit{$i$-face} and denote it by $F_i$ or $\Fij{i}{j}$ (with index $j$ for emphasis if there is more than one $i$-face). Thus, we denote the least face by $F_{-1}$ and the greatest face by $F_3$. When drawing a Hasse diagram, we shall adopt the convention of putting faces of the same rank at the same level and faces of different ranks at different levels arranged in ascending order of ranks.

    An \textit{abstract polyhedron} or a \textit{polytope of rank 3} (McMullen \& Schulte, 2002) is a partially ordered set $\mathcal{P}$ that satisfies properties P1, P2, P3 above, and property P4, also called the \textit{diamond property}, below:

    \begin{enumerate}
        \item[(P4)] If $F_{i - 1} \leq F_{i + 1}$, where $0 \leq i \leq 2$, then there are precisely two $i$-faces $F_i$ in $\mathcal{P}$ such that $F_{i - 1} \leq F_i \leq F_{i + 1}$.
    \end{enumerate}

    This definition of an abstract polyhedron is, in fact, a specific case of the more general definition of an \textit{abstract $n$-polytope} or \textit{polytope of rank $n$}. By rank of a polytope, we mean the rank of its greatest face. Borrowing terms from the theory of geometric polytopes, we shall refer to the $-1$-face of an abstract polyhedron as the empty face; a $0$-face as a \textit{vertex}; a $1$-face as an \textit{edge}; a $2$-face as a \textit{facet}; and the $3$-face as the \textit{cell}.

    \subsection{String C-groups}
    \label{subsec:stringC-groups}

    We can endow an abstract polyhedron $\mathcal{P}$ with an algebraic structure by defining a map on its faces that preserves both ranks and incidence relations. A bijective map $\gamma : \mathcal{P} \to \mathcal{P}$ is called an \textit{automorphism} if it is incidence-preserving on the faces:
    \[ F \leq G \text{ if and only if } \gamma(F) \leq \gamma(G). \]
    It is easy to verify using the properties of $\mathcal{P}$ that an automorphism is necessarily rank-preserving as well. By convention, we shall use the right action notation $F\gamma$ for the image $\gamma(F)$. Later, for nested mappings, it will be more convenient to use $\im{\gamma}{F}$ for this same image. We shall denote the group of all automorphisms of $\mathcal{P}$ by $\Gamma(\mathcal{P})$, or just $\Gamma$ when $\mathcal{P}$ is clear from context.

    An abstract polyhedron $\mathcal{P}$ is said to be \textit{regular} if $\Gamma$ acts transitively on its set of flags. Consequently, one can verify that the number $p$ of vertices incident to a facet and the number $q$ of facets incident to a vertex are both constant. These determine the \textit{(Schl\"{a}fli) type} $\{p, q\}$ of the regular polyhedron. Following the notation used in the \textit{Atlas of Small Regular Polytopes} (Hartley, 2006), we denote by $\{p, q\}^*m_x$ a regular polyhedron of type $\{p, q\}$ with automorphism group of order $m$. The index $x$, when present, distinguishes a polyhedron from other polyhedra of the same type with automorphism group of the same order.
% hard to imagine/arrive at "consequently, the number $p$ of vertices incident...", but may be too complicated to resolve without adding a paragraph or an example

    For a regular polyhedron of type $\{p, q\}$ , the automorphism group is a \textit{rank 3 string C-group of type $\{p, q\}$} and is best described as a pair $(\Gamma, \: T)$, which consists of a group $\Gamma$ and an ordered triple $T$ of distinct generating involutions $t_0, t_1, t_2$ that satisfy three properties:

    \begin{enumerate}
        \item string property: $t_0t_2 = t_2t_0$
        \item intersection property: $\langle{t_0, t_1}\rangle \cap \langle{t_1, t_2}\rangle = \langle{t_1}\rangle$
        \item order property: $\text{ord}(t_0t_1) = p$, $\text{ord}(t_1t_2) = q$
    \end{enumerate}

    Two string C-groups $(\Gamma, \: \{t_0, t_1, t_2\})$ and $(\Gamma', \: \{t_0', t_1', t_2'\})$ are considered equivalent if they have the same type and the map determined by $t_i \mapsto t_i'$ for $0 \leq i \leq 2$ is a group isomorphism. Since equivalence of string C-groups is dependent on the distinguished generating triples, we emphasize that two string C-groups may be considered distinct even if they are isomorphic as abstract groups.

    A fundamental result in the theory of abstract polytopes is the bijective correspondence between regular polyhedra and rank 3 string C-groups. It follows that the enumeration of regular polyhedra is equivalent to the enumeration of rank 3 string C-groups. Thus, given an arbitrary group $\Gamma$, one may determine all polyhedra with automorphism group isomorphic to $\Gamma$ by listing all generating triples $T$ of distinct involutions $t_0$, $t_1$, $t_2$ that satisfy the string and intersection conditions. For groups of relatively small order, it is straightforward to implement a listing procedure to accomplish this task in the software GAP (The GAP Group, 2019). We apply this procedure to the non-crystallographic Coxeter group $H_3$ and obtain 15 regular abstract \textit{$H_3$-polyhedra} with each belonging to one of 9 types summarized in Table \ref{tbl:H3-polyhedra}.
    
    \begin{table}[!htb]
    %\footnotesize
    \makebox[\textwidth][c]{
        \begin{tabular}{cccccc}
            \hline
            $\mathcal{P}$ & $t_0$ & $t_1$ & $t_2$ & $\dim{\mathcal{W}(\varphi_1, \; (H_3, T))}$ & $\dim{\mathcal{W}(\varphi_2, \; (H_3, T))}$ \\
            \hline
            %\rowcolor{lightyellow}
            $\{3, 5\}^*120$ & $s_0$ & $s_1$ & $s_2$ & $1$ & $1$ \\
            $\{3, 10\}^*120_a$ & $s_0$ & $s_1$ & $s_0s_2$ & $0$ & $0$ \\
            $\{3, 10\}^*120_b$ & $s_0s_2$ & $(s_1s_2)^2s_0s_1s_2s_1$ & $s_0$ & $0$ & $0$ \\
            %\rowcolor{lightyellow}
            $\{5, 3\}^*120$ & $s_2$ & $s_1$ & $s_0$ & $1$ & $1$ \\
            %\rowcolor{lightyellow}
            $\{5, 5\}^*120$ & $s_0$ & $s_1s_2s_1$ & $s_2$ & $1$ & $1$ \\
            $\{5, 6\}^*120_b$ & $s_0$ & $s_1s_2s_1$ & $s_0s_2$ & $0$ & $0$ \\
            $\{5, 6\}^*120_c$ & $s_0s_2$ & $s_1s_0s_2s_1$ & $s_2$ & $0$ & $0$ \\
            $\{5, 10\}^*120_a$ & $s_0$ & $s_1s_2s_1$ & $(s_1s_0s_2)^4s_1s_0$ & $0$ & $0$ \\
            $\{5, 10\}^*120_b$ & $s_0s_2$ & $s_1s_0s_2s_1$ & $s_0$ & $0$ & $0$ \\
            $\{6, 5\}^*120_b$ & $s_0$ & $(s_1s_0s_2)^3s_1$ & $s_0s_2$ & $0$ & $0$ \\
            %\rowcolor{lightyellow}
            $\{6, 5\}^*120_c$ & $s_0s_2$ & $s_1s_2s_1$ & $s_0$ & $1$ & $1$ \\
            %\rowcolor{lightyellow}
            $\{10, 3\}^*120_b$ & $s_0s_2$ & $s_1$ & $s_0$ & $1$ & $1$ \\
            $\{10, 3\}^*120_c$ & $s_0$ & $s_1s_0s_2s_1$ & $(s_1s_0s_2)^4s_1s_0$ & $0$ & $0$ \\
            $\{10, 5\}^*120_a$ & $s_0$ & $s_1s_0s_2s_1$ & $s_0s_2$ & $0$ & $0$ \\
            %\rowcolor{lightyellow}
            $\{10, 5\}^*120_b$ & $s_0s_2$ & $s_1$ & $s_2$ & $1$ & $1$ \\
            \hline
        \end{tabular}}
        \caption{The $H_3$-polyhedra with automorphism group generated by $T = \{t_0, t_1, t_2\}$. \label{tbl:H3-polyhedra}} %Those with non-zero Wythoff dimensions are highlighted.
    \end{table}

    \subsection{Coset-based construction method}
    \label{subsec:cosetConstructionMethod}

    Given a string C-group $(\Gamma, \: T)$, one may construct a regular abstract polyhedron $\mathcal{P}$ with automorphism group $\Gamma$. This is done by defining the cosets of certain subgroups of $\Gamma$ as the faces of $\mathcal{P}$ and partially ordering these cosets using a suitably chosen binary relation. In the theorem below, we employ the construction method in McMullen \& Schulte (2002).

    \begin{theorem}
        Suppose $(\Gamma, \: \{t_0, t_1, t_2\})$ is a string C-group of type $\{p, q\}$. Let $\Gamma_{-1} = \Gamma$, $\Gamma_3 = \Gamma$, and $\Gamma_i = \langle{t_k \mid k \neq i}\rangle$ for $0 \leq i \leq 2$. Then the following sequence of steps produces a regular abstract polyhedron $\mathcal{P}$ of type $\{p, q\}$ and automorphism group $\Gamma$:
        \begin{enumerate}
            \item Generate a complete list of right coset representatives $\gammaij{i}{j}$ of $\Gamma_i$ indexed by $1 \leq j \leq [\Gamma : \Gamma_i]$ for $-1 \leq i \leq 3$.

            \item Define $\mathcal{P}$ to be the set consisting of $F_{-1} = \Fij{-1}{1} = \Gamma_{-1}$, $F_3 = \Fij{3}{1} = \Gamma_3$, and $\Fij{i}{j} = \Gamma_i\gammaij{i}{j}$.

            \item Define a binary relation $\leq$ on $\mathcal{P}$ where $F_{i, \; j} \leq F_{i', \; j'}$ if and only if $i \leq i'$ and $\Gamma_i\gammaij{i}{j} \cap \Gamma_{i'}\gammaij{i'}{j'} \neq \varnothing$.
        \end{enumerate}
        Moreover, the number of $i$-faces of $\mathcal{P}$ is equal to the index of $\Gamma_i$ in $\Gamma$.
        \label{thm:cosetConstructionMethod}
    \end{theorem}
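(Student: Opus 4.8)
The statement is the rank-$3$ instance of the standard construction of a regular abstract polytope from a string C-group (McMullen \& Schulte, 2002, \S 2E), and I would follow that template: first verify that $(\mathcal{P}, \leq)$ is a partially ordered set satisfying P1--P4, then show that $\Gamma$ acts flag-transitively on $\mathcal{P}$ and recover $\Gamma(\mathcal{P}) \cong \Gamma$, and finally read off the Schl\"{a}fli type and the face counts. It is convenient to introduce, for $I \subseteq \{0, 1, 2\}$, the subgroup $\Gamma_I = \langle t_k \mid k \in I \rangle$, so that $\Gamma_i = \Gamma_{\{0,1,2\} \setminus \{i\}}$ for $0 \leq i \leq 2$, and to record at the outset the two structural facts that do the work: the \emph{generalized intersection property} $\Gamma_I \cap \Gamma_J = \Gamma_{I \cap J}$, which follows from the string and intersection properties of $(\Gamma, T)$; and the observation that the string property makes $t_0$ and $t_2$ commute, so that $\Gamma_1 = \langle t_0, t_2 \rangle = \{e, t_0, t_2, t_0 t_2\}$ is a Klein four-group while $\langle t_0, t_1 \rangle$ and $\langle t_1, t_2 \rangle$ are dihedral of orders $2p$ and $2q$. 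Note also that right multiplication by any $\varphi \in \Gamma$ permutes the cosets appearing in $\mathcal{P}$, preserving ranks and the relation $\leq$; I will use this freely to translate configurations into a convenient position.

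With these preliminaries, reflexivity and antisymmetry of $\leq$ are immediate (two distinct faces of equal rank correspond to disjoint cosets, hence are incomparable), and the only subtle order axiom is transitivity. The sole non-trivial case is a chain $F_0 \leq F_1 \leq F_2$ of faces of ranks $0, 1, 2$; after translating so that $F_1 = \Gamma_1$, one may replace $F_0$ by $\Gamma_0 b$ and $F_2$ by $\Gamma_2 c$ with $b, c \in \Gamma_1$ (choosing $b$, $c$ inside the non-empty intersections with $\Gamma_1$). Since $t_0 \in \Gamma_2$, $t_2 \in \Gamma_0$ and $t_0 t_2 = t_2 t_0$, every element of $\Gamma_1 = \{e, t_0, t_2, t_0 t_2\}$ lies in $\Gamma_0 \Gamma_2$; hence $b c^{-1} \in \Gamma_1 \subseteq \Gamma_0 \Gamma_2$, which is exactly the condition $\Gamma_0 b \cap \Gamma_2 c \neq \varnothing$, i.e.\ $F_0 \leq F_2$. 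Property P1 holds because $\Gamma_{-1} = \Gamma_3 = \Gamma$ is a single coset meeting every other coset. For P2, observe that $\Gamma_i \gamma$ and $\Gamma_{i \pm 1} \gamma$ always share the element $\gamma$, so every face lies in a chain containing exactly one face of each rank $-1, 0, 1, 2, 3$; such a chain is maximal, and P4 (below) forbids a maximal chain from omitting a rank, so every flag has five faces.

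The diamond property P4 is the heart of the matter and, I expect, the main obstacle. Given $F_{i-1} \leq F_{i+1}$, I would pick a common element of the two cosets and translate so that $F_{i-1} = \Gamma_{i-1}$ and $F_{i+1} = \Gamma_{i+1}$; the $i$-faces lying between them are then the distinct left cosets $\Gamma_i h$ with $h \in \Gamma_i \Gamma_{i-1}$ and $h \in \Gamma_i \Gamma_{i+1}$. When $i = 0$ or $i = 2$, one of $\Gamma_{i-1}$, $\Gamma_{i+1}$ equals $\Gamma$, so one of the two conditions is vacuous and the number of such cosets is $[\Gamma_1 : \Gamma_0 \cap \Gamma_1] = 2$, respectively $[\Gamma_1 : \Gamma_1 \cap \Gamma_2] = 2$, using that $\Gamma_1$ is a Klein four-group. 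When $i = 1$ the delicate step is the permutability identity $\Gamma_1 \Gamma_0 \cap \Gamma_1 \Gamma_2 = \Gamma_1 (\Gamma_0 \cap \Gamma_2)$; granting it, and using $\Gamma_0 \cap \Gamma_2 = \langle t_1 \rangle$ and $t_1 \notin \Gamma_1$ (both consequences of the generalized intersection property), the eligible cosets are exactly $\Gamma_1$ and $\Gamma_1 t_1$, so again there are precisely two. Establishing this identity --- equivalently, the fact that a non-empty intersection $\Gamma_I \varphi \cap \Gamma_J \psi$ is always a single coset of $\Gamma_{I \cap J}$ --- is the real technical content; it leans on the intersection property of $(\Gamma, T)$, and in rank $3$ it can be checked by a direct manipulation inside the four-element group $\Gamma_1$. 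Strong flag-connectedness P3 I would deduce either by applying the same reasoning to the sections of $\mathcal{P}$ (each of which is itself the coset poset of a string C-group generated by a subset of $T$, so one may induct) or, once flag-transitivity is available, directly from the fact that $T$ generates $\Gamma$.

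Finally, right multiplication embeds $\Gamma$ into $\Gamma(\mathcal{P})$: the action preserves ranks and $\leq$, and its kernel is contained in $\Gamma_0 \cap \Gamma_1 \cap \Gamma_2 = \{e\}$. The base flag $\Phi_0 = \{\Gamma_{-1}, \Gamma_0, \Gamma_1, \Gamma_2, \Gamma_3\}$ has, for each $0 \leq i \leq 2$, the flag $\Phi_0 t_i$ as its unique $i$-adjacent flag (uniqueness is exactly P4 applied to the appropriate pair of faces of $\Phi_0$); hence, by the $\Gamma$-equivariance of flag adjacency and by flag-connectedness, every flag equals $\Phi_0 w$ for some $w \in \langle t_0, t_1, t_2 \rangle = \Gamma$. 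Thus $\Gamma$ is flag-transitive and $\mathcal{P}$ is regular; since $\Phi_0 \varphi = \Phi_0$ forces $\varphi \in \Gamma_0 \cap \Gamma_1 \cap \Gamma_2 = \{e\}$, the action is even simply transitive, so $|\Gamma(\mathcal{P})|$ equals the number of flags, which is $|\Gamma|$, and therefore $\Gamma(\mathcal{P}) = \Gamma$. The facet section $\Gamma_2 / \Gamma_{-1}$ has $[\langle t_0, t_1 \rangle : \langle t_1 \rangle] = \operatorname{ord}(t_0 t_1) = p$ vertices and each vertex lies in $[\langle t_1, t_2 \rangle : \langle t_1 \rangle] = \operatorname{ord}(t_1 t_2) = q$ facets, so $\mathcal{P}$ has type $\{p, q\}$; and since the $i$-faces are by construction precisely the right cosets of $\Gamma_i$, their number is $[\Gamma : \Gamma_i]$, which is the final claim.
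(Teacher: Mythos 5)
The paper offers no proof of Theorem~\ref{thm:cosetConstructionMethod}; it is imported from McMullen \& Schulte (2002), so the only possible comparison is with that standard construction --- and your outline is exactly its rank-$3$ specialization rather than a genuinely different route. The sketch is essentially correct: the generalized intersection property $\Gamma_I \cap \Gamma_J = \Gamma_{I \cap J}$ does follow in rank $3$ from the single stated condition $\langle t_0, t_1\rangle \cap \langle t_1, t_2\rangle = \langle t_1\rangle$ together with the distinctness of the involutions, and the permutability identity $\Gamma_1\Gamma_0 \cap \Gamma_1\Gamma_2 = \Gamma_1\langle t_1\rangle$ that you rightly isolate as the technical heart of the diamond property at $i = 1$ does check out by running the four elements of $\Gamma_1 = \{e, t_0, t_2, t_0t_2\}$ against $\Gamma_0 \cap \Gamma_2 = \langle t_1\rangle$, exactly as you claim. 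Two cautions. First, your alternative route to P3 (``once flag-transitivity is available'') is circular as written, because your final paragraph derives flag-transitivity from flag-connectedness; either carry out the section-by-section induction you mention first, or prove transitivity directly (in rank $3$ one can translate any flag so that its $1$-face is $\Gamma_1$ and check by hand that it equals $\Phi_0 w$ for some $w \in \Gamma_1$). Second, the step ``$|\Gamma(\mathcal{P})|$ equals the number of flags'' silently invokes the standard fact that automorphisms of an abstract polytope act freely on flags, which itself rests on P3 and P4 and deserves at least a citation.
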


    As a consequence of this theorem, we may identify a regular polyhedron $\mathcal{P}$ with $\Gamma$ and an $i$-face $\Fij{i}{j}$ with a coset representative $\gammaij{i}{j}$ of $\Gamma_i$. For simplicity, we may assume this representative is the identity $e$ when $j = 1$.

    The Hasse diagram in Fig. \ref{fig:pentagram}(\textit{a}) is a section of the $H_3$-polyhedron $\{5, 3\}^*120$ in Table \ref{tbl:H3-polyhedra} consisting of a single empty face, $20$ vertices, $30$ edges, $12$ facets, and a single cell. This polyhedron results from applying Theorem \ref{thm:cosetConstructionMethod} to the string C-group $(H_3, \: \{s_2, s_1, s_0\})$.

\section{Regular geometric polyhedra and Wythoff construction}
\label{sec:geometricPolyhedra}
\sloppy

    Consider a regular abstract polyhedron $\mathcal{P}$ whose set of abstract $i$-faces is $\mathcal{P}_i$, where $-1 \leq i \leq 3$. Let $\Gamma$ be its automorphism group with distinguished generating triple $T = \{t_0, t_1, t_2\}$. By an open set of dimension $i$ in the Euclidean $n$-space $\mathbb{E}^n$, we mean a subset that is homeomorphic to an open set of $\mathbb{E}^i$.

    \subsection{Regular geometric polyhedra}
    \label{subsec:regularGeometricPolyhedra}

    Define the map $\rho_{-1} : \mathcal{P}_{-1} \to \mathbb{E}^n$ that sends the empty face $F_{-1}$ to the empty set $\mathcal{O}_{-1} = \varnothing$. Then for each $0 \leq i \leq 3$, recursively define a map $\rho_i : \mathcal{P}_i \to \mathbb{E}^n$ that sends each $i$-face $\Fij{i}{j}$ with index $1 \leq j \leq [\Gamma : \Gamma_i]$ to a non-empty open set $\Oij{i}{j}$ of dimension $i$. We require that the boundary of $\Oij{i}{j}$ be $\displaystyle{\bigcup_{0 \leq k < i}\left(\bigcup_{\Fij{k}{l} \leq \Fij{i}{j}}{\Oij{k}{l}}\right)}$, the union of the $\rho_k$-images of the lower rank $k$-faces $\Fij{k}{l}$ incident to $\Fij{i}{j}$.
    
\begin{figure}[!htb]
            \begin{tabular}{ccccc}
                \parbox[c]{1.3in}{\includegraphics[width = 1.3in]{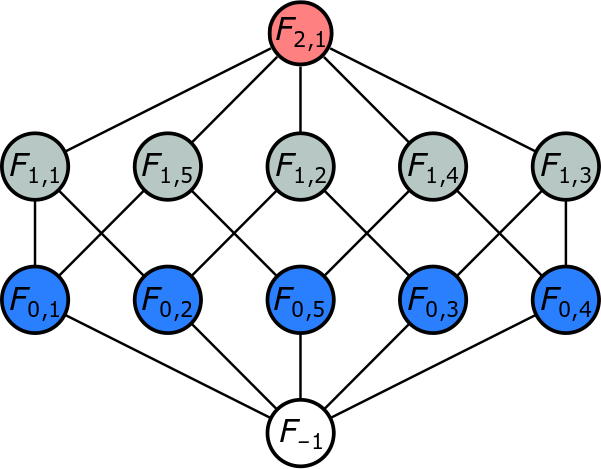}} 
                & 
                & \parbox[c]{1.3in}{\includegraphics[width = 1.3in]{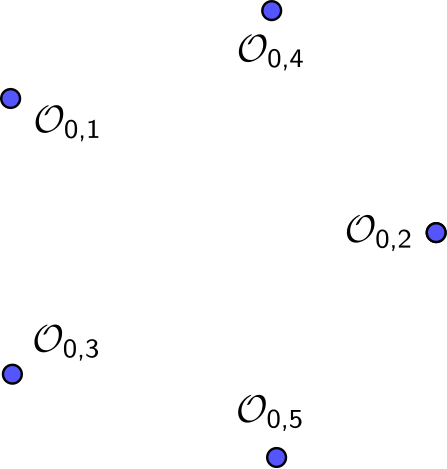}} 
                &
                & \parbox[c]{1.3in}{\includegraphics[width = 1.3in]{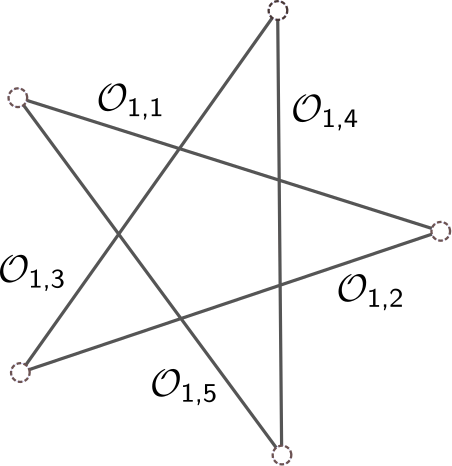}} \\
                (\textit{a}) 
                &
                & (\textit{b}) 
                &
                & (\textit{c}) \\
                \\[1mm]
                \parbox[c]{1.3in}{\includegraphics[width = 1.3in]{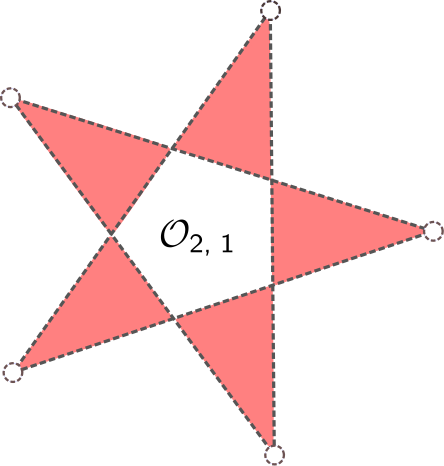}} 
                &
                & \parbox[c]{1.3in}{\includegraphics[width = 1.3in]{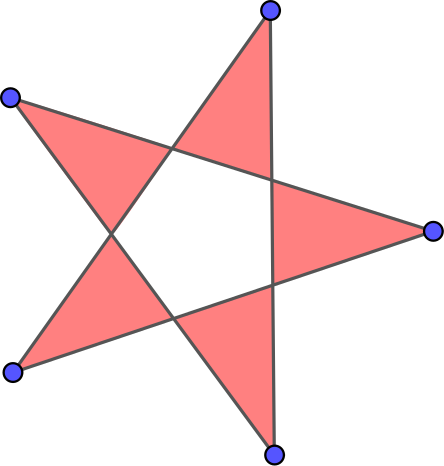}} 
                &
                & \parbox[c]{1.3in}{\includegraphics[width = 1.3in]{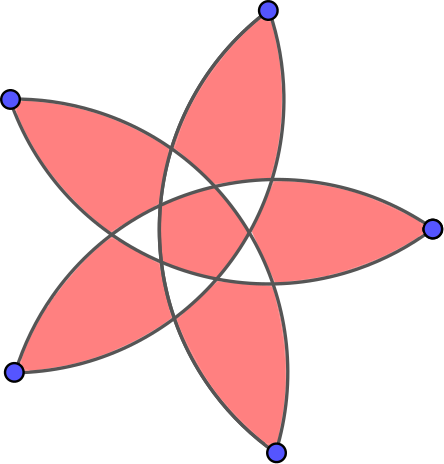}} \\
                (\textit{d}) 
                &
                & (\textit{e}) 
                &
                & (\textit{f})
            \end{tabular}
        \caption{(\textit{a}) Hasse diagram of a section of $\{5, 3\}^*120$. Geometric (\textit{b}) vertices, (\textit{c}) edges, (\textit{d}) facets corresponding to abstract faces that appear in the diagram. (\textit{e}) Regular pentagram obtained by combining the geometric faces in (\textit{a}) -- (\textit{d}). (\textit{f}) The regular pentagram in (e) with straight edges replaced by circular arcs. \label{fig:pentagram}}
\end{figure}

    \begin{illustration}
        We illustrate the images of the $i$-faces of $\{5, 3\}^*120$ that appear in the section represented by the Hasse diagram in Fig. \ref{fig:pentagram}(\textit{a}). These images partially determine maps $\rho_i$ for $0 \leq i \leq 2$. Take the points $\Oij{0}{j}$, $1 \leq j \leq 5$, in $\mathbb{E}^3$ (Fig. \ref{fig:pentagram}(\textit{b})) and let $\rho_0$ send each vertex $\Fij{0}{j}$ to $\Oij{0}{j}$; $\rho_1$ send each edge $\Fij{1}{j}$ to the open line segment $\Oij{1}{j}$ in Fig. \ref{fig:pentagram}(\textit{c}); and  $\rho_2$ send the facet $\Fij{2}{1}$ to the disconnected open region $\Oij{2}{1}$ in Fig. \ref{fig:pentagram}(\textit{d}). When these open sets of different dimensions are combined, we obtain the \textit{pentagram} shown in Fig. \ref{fig:pentagram}(\textit{e}). Choosing open arcs as the images of the edges instead and the disjoint union of suitably chosen open regions as the image of the lone facet, we obtain the figure illustrated in Fig. \ref{fig:pentagram}(\textit{f}). %Continuing recursively, we let $\rho_3$ be the map which sends the sole cell $\Fij{3}{1}$ to an open set $\Oij{3}{1}$ whose boundary contains this pentagram.
        \label{ill:pentagram}
    \end{illustration}

    The mapping $\rho : \mathcal{P} \to \mathbb{E}^n$ whose restriction to $\mathcal{P}_i$ is $\rho_i$ is called a \textit{geometric realization} of $\mathcal{P}$. To simplify the discussion, we limit ourselves to when $n = 3$, in which case $\rho$ is called a realization of \textit{full rank}. To distinguish between an $i$-face in $\mathcal{P}$ and its image under $\rho$, we call the former an \textit{abstract} $i$-face and the latter the \textit{realization} of this abstract $i$-face, or a \textit{geometric} $i$-face. Notice that the rank of an abstract face corresponds to the dimension of a geometric face in a realization. We now refer to the union of the geometric faces, which we denote by $\rho(\mathcal{P})$, as a \textit{regular geometric polyhedron} or, after identifying $\rho$ with its image, a \textit{geometric realization} of $\mathcal{P}$.

    We remark that the definition of a realization stated above is an interpretation of the standard definition (McMullen \& Schulte, 2002) in which abstract vertices are identified as points in space; edges as pairs of points; facets as sets of these pairs; and the cell as a collection of these sets of pairs. The standard definition, therefore, provides a blueprint to build a geometric polyhedron starting from its vertices and lets one exercise the freedom to choose Euclidean figures to represent abstract faces. Taking advantage of this freedom, we specify that abstract faces be associated to open sets with the appropriate dimension and boundary. This is to make the notion of a realization as wide-ranging as possible in order to cover typical figures representing known geometric polyhedra such as regular convex and star polyhedra. As we will see later, this will also allow one to generate polyhedra using curved edges and surfaces. Our definition of a realization is, in fact, consistent with the theory of \textit{real polytopes} formulated by Johnson (2008). Essentially, Johnson defines a realization to be an assembly of open regions in space with imposed restrictions pertaining to their boundaries and intersections.

    \subsection{Wythoff construction}
    \label{subsec:wythoffConstructionMethod}

    A \textit{faithful} realization $\rho$ is one where each induced map $\rho_i$ is injective. That is, distinct abstract $i$-faces $\Fij{i}{j}$ are sent to distinct geometric $i$-faces $\Oij{i}{j}$. It follows that there is a bijective correspondence between the set of $\Fij{i}{j}$'s and the set of $\Oij{i}{j}$'s that preserves ranks and incidence relations in the former, and dimensions and boundary relations in the latter.

    A \textit{symmetric} realization, on the other hand, is one where each automorphism $\gamma \in \Gamma$ corresponds to an isometry of $\mathbb{E}^3$ that symmetrically permutes the $\Oij{i}{j}$'s. More specifically, a symmetric realization presupposes the existence of an orthogonal representation $\varphi : \Gamma \to O(3)$ that satisfies
    \begin{equation}
        \rho_i(\im{\gamma}{\Fij{i}{j}}) = \im{\varphi(\gamma)}{\rho_i(\Fij{i}{j})} = \im{\varphi(\gamma)}{\Oij{i}{j}}.
        \label{eq:symmetric}
    \end{equation}
    We recall that $\varphi(\gamma)$ acts on $\mathbb{E}^3$ and preserves the usual Euclidean inner product. Consequently, for a fixed orthogonal basis, we may represent each $\gamma$ with a $3 \times 3$ real orthogonal matrix. We denote the image $\varphi(\Gamma)$ of this representation by $G(\rho(\mathcal{P}))$, or just $G$ when $\rho(\mathcal{P})$ is clear from context. We remark that $G$ is the symmetry group of the geometric polyhedron whenever $\rho$ itself is faithful and symmetric. Such a realization always implies that $\varphi$ is faithful:

    \begin{proposition}
        Let $\rho$ be a faithful symmetric realization of $\mathcal{P}$. If $\varphi : \Gamma \to O(3)$ is the associated orthogonal representation, then $\varphi$ is faithful.
        \label{prop:faithfulRep}
    \end{proposition}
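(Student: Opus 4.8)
The plan is to show directly that $\ker\varphi$ is trivial, using the faithfulness of $\rho$ together with the defining relation (\ref{eq:symmetric}) of a symmetric realization. First I would take an arbitrary $\gamma \in \ker\varphi$, so that $\varphi(\gamma)$ is the identity transformation of $\mathbb{E}^3$. Since the identity transformation fixes every subset of $\mathbb{E}^3$ (indeed pointwise), we have $\im{\varphi(\gamma)}{\Oij{i}{j}} = \Oij{i}{j}$ for every geometric $i$-face $\Oij{i}{j}$, where $-1 \leq i \leq 3$; the case $i = -1$ is trivial since $\mathcal{O}_{-1} = \varnothing$.

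Next I would feed this into the symmetry relation (\ref{eq:symmetric}). For each abstract $i$-face $\Fij{i}{j}$ we obtain
\[
    \rho_i(\im{\gamma}{\Fij{i}{j}}) = \im{\varphi(\gamma)}{\Oij{i}{j}} = \Oij{i}{j} = \rho_i(\Fij{i}{j}).
\]
Because $\rho$ is faithful, each induced map $\rho_i$ is injective, so the displayed equality forces $\im{\gamma}{\Fij{i}{j}} = \Fij{i}{j}$. Letting $\Fij{i}{j}$ range over all faces of $\mathcal{P}$ (the least and greatest faces being fixed by every automorphism in any case), we conclude that $\gamma$ fixes every face of $\mathcal{P}$. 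An automorphism of $\mathcal{P}$ is by definition a bijection on the full face set, so $\gamma$ is the identity automorphism, i.e. $\gamma = e$ in $\Gamma$. Hence $\ker\varphi = \{e\}$ and $\varphi$ is faithful.

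The argument is essentially a matter of unwinding the definitions once the two hypotheses — faithfulness of $\rho$ and property (\ref{eq:symmetric}) — are in hand, so I do not anticipate a substantial obstacle. The only point requiring a little care is the bookkeeping over ranks: one should note that fixing $\Fij{i}{j}$ for all $0 \leq i \leq 2$ (together with the automatic fixing of $F_{-1}$ and $F_3$) already pins $\gamma$ down on all of $\mathcal{P}$, so the conclusion $\gamma = e$ is legitimate. I would present it in this form, emphasizing that faithfulness of the representation is a formal consequence of the realization being faithful and symmetric.
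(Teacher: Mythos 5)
Your proof is correct and follows essentially the same route as the paper's: take $\gamma$ in the kernel, apply the symmetry relation \eqref{eq:symmetric} with $\varphi(\gamma)$ the identity isometry, and use injectivity of each $\rho_i$ (faithfulness of $\rho$) to conclude that $\gamma$ fixes every face and hence equals $e$. The only difference is cosmetic — you phrase it as showing $\ker\varphi$ is trivial and add the bookkeeping remark about $F_{-1}$ and $F_3$, which the paper leaves implicit.
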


    \begin{proof}
        It suffices to show that if $\varphi(\gamma)$ is the identity isometry $\iota$, then $\gamma$ is the identity automorphism $e$. By equation \eqref{eq:symmetric}, we have
        \[ \rho_i(\im{\gamma}{\Fij{i}{j}}) = \im{\varphi(\gamma)}{\rho_i(\Fij{i}{j})} = \im{\iota}{\rho_i(\Fij{i}{j})} = \rho_i(\Fij{i}{j}) \]
        for any abstract $i$-face $\Fij{i}{j}$. Thus, $\rho_i(\text{Im}(\gamma, \: \Fij{i}{j})) = \rho_i(\Fij{i}{j})$, which is equivalent to $\text{Im}(\gamma, \: \Fij{i}{j}) = \Fij{i}{j}$ by faithfulness of $\rho$. Since $\Fij{i}{j}$ is arbitrary, $\gamma$ must be $e$. Consequently, $\varphi$ is faithful.
    \end{proof}

    From this point forward, we restrict ourselves to realizations $\rho$ which are both faithful and symmetric. With these properties not only do we have a correspondence between abstract and geometric faces, we also have a correspondence between the action of the automorphism group on the abstract faces and the action of the symmetry group on the corresponding geometric faces. Consequently, any geometric polyhedron obtained from $\rho$ will automatically satisfy regularity or transitivity of geometric flags. Thus, to construct $\rho$, we must employ a faithful orthogonal representation by Proposition \ref{prop:faithfulRep}. The group $H_3$ has two such irreducible representations (Koca \& Koca, 1998):
    \begin{equation*}
        \varphi_1 \: : \: s_0 \mapsto
        \begin{bmatrix}
            -1 & 0 & 0 \\
            0 & 1 & 0 \\
            0 & 0 & 1 \\
        \end{bmatrix}, \;\;
        s_1 \mapsto
        \frac{1}{2}
        \begin{bmatrix}
            1 & -\tau & -\sigma \\
            -\tau & \sigma & 1 \\
            -\sigma & 1 & \tau \\
        \end{bmatrix}, \;\;
        s_2 \mapsto
        \begin{bmatrix}
            1 & 0 & 0 \\
            0 & -1 & 0 \\
            0 & 0 & 1 \\
        \end{bmatrix},
    \end{equation*}

    \begin{equation*}
        \varphi_2 \: : \: s_0 \mapsto
        \begin{bmatrix}
            -1 & 0 & 0 \\
            0 & 1 & 0 \\
            0 & 0 & 1 \\
        \end{bmatrix}, \;\;
        s_1 \mapsto
        \frac{1}{2}
        \begin{bmatrix}
            1 & -\sigma & -\tau \\
            -\sigma & \tau & 1 \\
            -\tau & 1 & \sigma \\
        \end{bmatrix}, \;\;
        s_1 \mapsto
        \begin{bmatrix}
            1 & 0 & 0 \\
            0 & -1 & 0 \\
            0 & 0 & 1 \\
        \end{bmatrix},
    \end{equation*}
    where $\tau = \frac{1 + \sqrt{5}}{2}$ and $\sigma = \frac{1 - \sqrt{5}}{2}$.

    We now describe an explicit construction method in Theorem \ref{thm:WythoffConstruction} to obtain a realization of a polyhedron from a string C-group $(\Gamma, \: T)$. Recall earlier that we may identify an $i$-face $\Fij{i}{j}$ with a coset representative $\gammaij{i}{j}$ of $\Gamma_i$.
    \begin{theorem}
        Let $(\Gamma, \: T)$ be a string C-group which characterizes the automorphism group of a regular abstract polyhedron $\mathcal{P}$ and let $\varphi$ be a faithful irreducible orthogonal representation of $\Gamma$. Then the following sequence of steps produces a faithful symmetric realization $\rho$ of $\mathcal{P}$:
        \begin{enumerate}
            \item Generate a complete list of right coset representatives $\gammaij{i}{j}$ of $\Gamma_i$ with index $1 \leq j \leq [\Gamma : \Gamma_i]$ for $0 \leq i \leq 3$.
            \item Compute the matrix representations $\varphi(\gammaij{i}{j})$ of the coset representatives $\gammaij{i}{j}$.
            \item Compute the \textit{Wythoff space}
                \[ \mathcal{W}(\varphi, \; (\Gamma, \: T)) = \{\mathbf{x} \in \mathbb{E}^3 \mid \im{\varphi(t_1)}{\mathbf{x}} = \im{\varphi(t_2)}{\mathbf{x}} = \mathbf{x}\} \]
                associated with the pair $(\varphi, \; (\Gamma, \: T))$. This space consists of points in $\mathbb{E}^3$ that are fixed by both $\varphi(t_1)$ and $\varphi(t_2)$.
            \item Pick a point $\mathbf{x} \in \mathcal{W}(\varphi, \; (\Gamma, \: T))$ and let $\Oij{0}{1}$ be $\mathbf{x}$.
            \item For $1 \leq i \leq 3$:
                \begin{enumerate}
                    \item Determine the abstract $(i - 1)$-faces incident to the base abstract $i$-face. Equivalently, determine the indexing set $J_i = \{j \mid \Gamma_{i - 1}\gammaij{i - 1}{j} \cap \Gamma_i \neq \varnothing\}$.
                    \item Compute the open sets $\Oij{i - 1}{j} = \im{\varphi(\gammaij{i - 1}{j})}{\Oij{i - 1}{1}}$ for each $j \in J_i$.
                    \item Let $\Oij{i}{1}$ be an open set that is bounded by $\Oij{i - 1}{j}$ for $j \in J_i$ and stabilized in $G$ by $G_i = \varphi(\Gamma_i)$.
                \end{enumerate}
            \item For $0 \leq i \leq 3$, define $\rho_i$ to be the map on $\mathcal{P}_i$ that sends each $\Fij{i}{j}$ to $\Oij{i}{j} = \im{\varphi(\gammaij{i}{j})}{\Oij{i}{1}}$ for $1 \leq j \leq [\Gamma : \Gamma_i]$.
            \item Define $\rho$ to be the map on $\mathcal{P}$ whose restriction to $\mathcal{P}_i$ is $\rho_i$.
        \end{enumerate}
        \label{thm:WythoffConstruction}
    \end{theorem}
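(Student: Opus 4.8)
The plan is to check, in turn, the three features asserted of the output: that $\rho$ is a realization (each $\rho_i$ carries an $i$-face to an open set of dimension $i$ whose boundary is the prescribed union of lower geometric faces), that $\rho$ is symmetric in the sense of \eqref{eq:symmetric}, and that $\rho$ is faithful. It is cleanest to treat symmetry first, since the other two properties rest on it, then deduce the realization axioms by induction on rank, and finish with faithfulness, which I expect to be the only delicate point. Throughout, the statement should be read as: any sequence of admissible choices in Step 5(c) — an open $i$-set with the prescribed boundary that is $G_i$-stable, e.g. a straight segment for $i=1$ or the obvious planar/spatial region otherwise — yields a faithful symmetric realization; the existence of such a choice is the user's to make, as in Illustration \ref{ill:pentagram}.

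For symmetry, put $\Oij{i}{j}=\im{\varphi(\gammaij{i}{j})}{\Oij{i}{1}}$ as in Step 6, and observe first that each base set $\Oij{i}{1}$ is stabilized by $G_i=\varphi(\Gamma_i)$: for $i=0$ because $\mathbf{x}\in\mathcal{W}(\varphi,(\Gamma,T))$ and $\Gamma_0=\langle t_1,t_2\rangle$, and for $1\le i\le 3$ because this is imposed in Step 5(c) (with $\Gamma_3=\Gamma$, so $\Oij{3}{1}$ is $G$-stable). Since $\varphi$ is a homomorphism and each $\varphi(\gamma)$ acts by an isometry, $G_i$-stability of $\Oij{i}{1}$ makes $\Oij{i}{j}$ independent of the chosen coset representative, so $\rho_i$ is well defined. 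Now if $\gamma\in\Gamma$ sends $\Fij{i}{j}=\Gamma_i\gammaij{i}{j}$ to $\Fij{i}{j'}$, then $\gammaij{i}{j}\gamma=h\,\gammaij{i}{j'}$ for some $h\in\Gamma_i$, so, using $\im{\varphi(h)}{\Oij{i}{1}}=\Oij{i}{1}$,
\[
\begin{aligned}
\im{\varphi(\gamma)}{\Oij{i}{j}}
&= \im{\varphi(\gammaij{i}{j}\gamma)}{\Oij{i}{1}}
= \im{\varphi(h\,\gammaij{i}{j'})}{\Oij{i}{1}} \\
&= \im{\varphi(\gammaij{i}{j'})}{\im{\varphi(h)}{\Oij{i}{1}}}
= \im{\varphi(\gammaij{i}{j'})}{\Oij{i}{1}}
= \Oij{i}{j'},
\end{aligned}
\]
and $\Oij{i}{j'}=\rho_i(\im{\gamma}{\Fij{i}{j}})$. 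This is exactly \eqref{eq:symmetric}, so $\rho$ is symmetric with associated representation $\varphi$ and $G=\varphi(\Gamma)$.

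For the realization axioms, each $\varphi(\gammaij{i}{j})$ is an isometry, so $\Oij{i}{j}$ is an open set of the same dimension $i$ as $\Oij{i}{1}$ and $\partial\Oij{i}{j}=\im{\varphi(\gammaij{i}{j})}{\partial\Oij{i}{1}}$; since $\gammaij{i}{j}$ preserves incidences, it suffices to establish the boundary condition for the base $i$-face, which I would do by induction on $i$. By Step 5(c), $\partial\Oij{i}{1}$ is the union of the closed geometric $(i-1)$-faces incident to $\Fij{i}{1}$, i.e. those with index in $J_i$; by the inductive hypothesis each such closure already contains every geometric $k$-face with $k<i-1$ incident to that $(i-1)$-face; and the combinatorial fact that every abstract $k$-face with $-1\le k<i-1$ below $\Fij{i}{1}$ lies below some $(i-1)$-face below $\Fij{i}{1}$ — because the section $\Fij{i}{1}/\Fij{k}{l}$ is itself an abstract polytope of positive rank by P1--P4 and hence has a co-atom — shows $\partial\Oij{i}{1}=\bigcup_{0\le k<i}\bigcup_{\Fij{k}{l}\le\Fij{i}{1}}\Oij{k}{l}$. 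Transporting by the isometries $\varphi(\gammaij{i}{j})$ gives the boundary for all $i$-faces; together with $\rho_{-1}(F_{-1})=\varnothing$ and Step 7, $\rho$ is a realization.

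The main obstacle is faithfulness, i.e. injectivity of each $\rho_i$, which is equivalent to $\mathrm{Stab}_G(\Oij{i}{1})=G_i$; the inclusion $G_i\subseteq\mathrm{Stab}_G(\Oij{i}{1})$ is automatic. There is nothing to prove for $i=3$. For $i=0$ one needs $\mathrm{Stab}_G(\mathbf{x})=G_0$, and this is where the hypothesis that $\mathbf{x}$ can be picked in $\mathcal{W}(\varphi,(\Gamma,T))$ is used essentially: when $\dim\mathcal{W}(\varphi,(\Gamma,T))\ge 1$, the set of points of $\mathcal{W}$ fixed by a given $g\in G\setminus G_0$ is a proper subspace (by irreducibility of $\varphi$ a nonidentity $g$ fixes only $\mathbf{0}$ of any line through the origin it does not act trivially on), so a suitable $\mathbf{x}$ avoids them all — this is precisely the point at which the rows of Table \ref{tbl:H3-polyhedra} with $\dim\mathcal{W}=0$ fall away. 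For $0<i<3$ I would induct: an element $g=\varphi(\gamma)\in\mathrm{Stab}_G(\Oij{i}{1})$ (with $\gamma$ unique since $\varphi$ is faithful) carries $\partial\Oij{i}{1}$ to itself, hence permutes its maximal connected open $(i-1)$-dimensional subsets, which are exactly the disjoint geometric faces $\Oij{i-1}{j}$, $j\in J_i$ (disjointness and correct dimension come from the realization axioms and injectivity of $\rho_{i-1}$); via the abstract--geometric correspondence already established, $\gamma$ then fixes $\Fij{i}{1}$ and permutes its incident $(i-1)$-faces, and the standard fact that the $\Gamma$-stabilizer of the coset $\Gamma_i$ in the construction of Theorem \ref{thm:cosetConstructionMethod} equals $\Gamma_i$ forces $\gamma\in\Gamma_i$, i.e. $g\in G_i$. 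The two points I expect to need care are (a) the topological claim that an isometry fixing $\partial\Oij{i}{1}$ setwise must permute the $\Oij{i-1}{j}$, which uses that these are precisely the maximal open $(i-1)$-strata of that boundary, and (b) the existence/genericity argument for $\mathbf{x}$; the rest is bookkeeping with the homomorphism property and the coset geometry.
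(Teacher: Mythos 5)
Your symmetry argument is the same as the paper's (write $\gamma=\gammaij{i}{j}^{-1}\sigma\gammaij{i}{k}$ with $\sigma\in\Gamma_i$ and push $\Oij{i}{j}$ through $\varphi(\gamma)$), and your extra verification of the boundary/realization conditions, while not in the paper's proof, is harmless. The divergence is in faithfulness, and there your argument has genuine gaps. The paper reads Step 5(c) as requiring the stabilizer of $\Oij{i}{1}$ in $G$ to be \emph{exactly} $G_i$ (its proof says ``$\Oij{i}{1}$ is chosen so that its stabilizer in $G$ is $G_i$''), after which faithfulness is a two-line computation: $\rho_i(\Fij{i}{j})=\rho_i(\Fij{i}{k})$ forces $\varphi(\gammaij{i}{j}\gammaij{i}{k}^{-1})\in G_i$, and faithfulness of $\varphi$ gives $\gammaij{i}{j}\gammaij{i}{k}^{-1}\in\Gamma_i$. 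You instead take Step 5(c) to mean only $G_i\subseteq\mathrm{Stab}_G(\Oij{i}{1})$ and try to \emph{prove} the reverse inclusion; that is the part that does not go through.

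Concretely: (a) at rank $0$ your genericity argument needs that no $g\in G\setminus G_0$ fixes $\mathcal{W}(\varphi,\,(\Gamma,T))$ pointwise, and the parenthetical appeal to irreducibility does not supply this --- it only says that a $g$ acting nontrivially on a line fixes just the origin, whereas the dangerous case is a $g\notin G_0$ acting trivially on the Wythoff line, i.e.\ precisely the statement $\mathrm{Stab}_G(\mathbf{x})=G_0$ that you are trying to establish. This holds for the rows of Table \ref{tbl:H3-polyhedra} with $\dim\mathcal{W}=1$ by inspection of the axial stabilizers of the icosahedral group, but it is not a formal consequence of ``$\varphi$ faithful irreducible'' for an arbitrary string C-group, so it cannot simply be asserted. (b) At ranks $1$ and $2$ your induction passes from ``$\varphi(\gamma)$ permutes the $(i-1)$-dimensional boundary strata of $\Oij{i}{1}$'' to ``$\gamma$ fixes $\Fij{i}{1}$, hence $\gamma\in\Gamma_i$''; this requires that an abstract $i$-face is determined by its set of incident $(i-1)$-faces, which fails in general abstract polyhedra (e.g.\ types $\{2,q\}$ and $\{p,2\}$, where distinct edges, resp.\ facets, share all of their lower faces), and it also uses pairwise disjointness of the geometric $(i-1)$-faces, which the paper's realization conditions never impose. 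Under the paper's reading of Step 5(c) none of this machinery is needed and the short argument above suffices; under your weaker reading the theorem is not yet proved.
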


    \begin{proof}
        To prove the theorem, we need only show that $\rho$ is faithful and symmetric. To this end, let $\gamma \in \Gamma$ and $\Fij{i}{j}, \Fij{i}{k} \in \mathcal{P}_i$.

        Suppose that $\rho_i(\Fij{i}{j}) = \rho_i(\Fij{i}{k})$. By the definition of $\Oij{i}{j}$ in Step 6, we obtain
        \[ \rho_i(\Fij{i}{j}) = \im{\varphi(\gammaij{i}{j})}{\Oij{i}{1}} \text{ and } \rho_i(\Fij{i}{k}) = \im{\varphi(\gammaij{i}{k})}{\Oij{i}{1}}, \]
        which implies that $\varphi(\gammaij{i}{j}\gammaij{i}{k}^{-1})$ stabilizes $\Oij{i}{1}$. Since $\Oij{i}{1}$ is chosen so that its stabilizer in $G$ is $G_i$, we must have $\gammaij{i}{j}\gammaij{i}{k}^{-1} \in \Gamma_i$. Thus, $\Gamma_i\gammaij{i}{j} = \Gamma_i\gammaij{i}{k}$, or equivalently, $\Fij{i}{j} = \Fij{i}{k}$. Hence, $\rho$ is faithful.

        To show that $\rho$ is symmetric as well, let $\im{\gamma}{\Fij{i}{j}} = \Fij{i}{k}$. It follows that $(\Gamma_i\gammaij{i}{j})\gamma = \Gamma_i\gammaij{i}{k}$ and so, $\gamma = \gammaij{i}{j}^{-1}\sigma\gammaij{i}{k}$ for some $\sigma \in \Gamma_i$. The image of  $\Oij{i}{j}$ under $\varphi(\gamma)$ is
        \begin{align*}
            \im{\varphi(\gamma)}{\Oij{i}{j}}
            &= \im{\varphi(\gammaij{i}{j}^{-1}\sigma\gammaij{i}{k})}{\Oij{i}{j}} \\
            &= \im{\varphi(\gammaij{i}{j}^{-1})\varphi(\sigma)\varphi(\gammaij{i}{k})}{\Oij{i}{j}} \\
            &= \im{\varphi(\sigma)\varphi(\gammaij{i}{k})}{\Oij{i}{1}} \\
            &= \im{\varphi(\gammaij{i}{k})}{\Oij{i}{1}}
        \end{align*}
        where each component of $\varphi(\gammaij{i}{j}^{-1})\varphi(\sigma)\varphi(\gammaij{i}{k})$ is sequentially applied to $\Oij{i}{j}$ from left to right to conform with the right action of $\Gamma$ on $\mathcal{P}_i$. We then have
        \[ \rho_i(\im{\gamma}{\Fij{i}{j}}) = \rho_i(\Fij{i}{k}) = \Oij{i}{k} = \im{\varphi(\gammaij{i}{k})}{ \Oij{i}{1}} = \im{\varphi(\gamma)}{\Oij{i}{j}}. \]
        Hence, $\rho$ is symmetric.
    \end{proof}

    The procedure described in Theorem \ref{thm:WythoffConstruction} is an algebraic version of the method of \textit{Wythoff construction} named after the Dutch mathematician Willem Abraham Wythoff (McMullen \& Schulte, 2002). Wythoff's original geometric version is used to construct uniform tessellations. It relies on a kaleidoscope-like setup in which three reflection mirrors bound what becomes a fundamental triangle of the resulting uniform figure (Coxeter, 1973). In Theorem \ref{thm:WythoffConstruction}, the fixed spaces of the generators in $T$, which may not necessarily be reflections, play the role of the mirrors.

    For a string C-group of type $\{p, q\}$, we may compute the dimension of the Wythoff space using the formula (Clancy, 2005)
    \begin{equation}
        \dim{\mathcal{W}(\varphi, \; (\Gamma, \: T))} = \frac{1}{2q}\sum_{\gamma \in \langle{t_1, t_2}\rangle}{\text{Tr } \varphi(\gamma)},
        \label{eq:WythoffDimension}
    \end{equation}
    where $\text{Tr } \varphi(\gamma)$ denotes the trace of $\varphi(\gamma)$. We note that if $\dim{\mathcal{W}(\varphi, \; (\Gamma, \: T))} = 0$, we do not obtain any realization via Theorem \ref{thm:WythoffConstruction}. If $\dim{\mathcal{W}(\varphi, \; (\Gamma, \: T))} = 1$, on the other hand, any two choices for the base geometric vertex will just be scalar multiples of each other. It follows that a 1-dimensional Wythoff space produces only \textit{algebraically equivalent} realizations. Different choices for the open image of a base face, however, may yield polyhedra that are topologically different.

        \begin{figure}
            \begin{tabular}{ccc}
                \parbox[c]{1.7in}{\includegraphics[width = 1.7in]{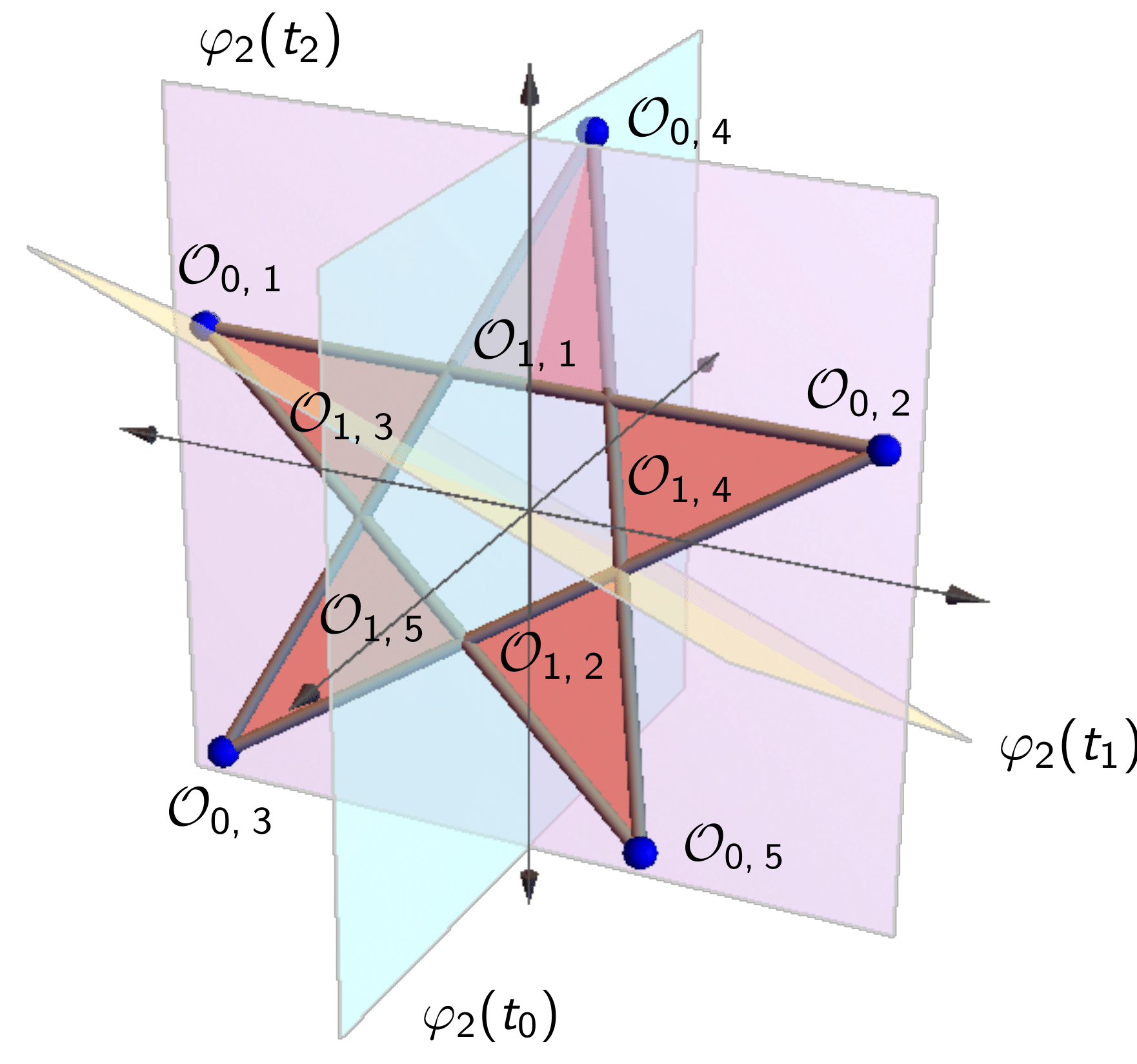}} & & \parbox[c]{1.7in}{\includegraphics[width = 1.3in]{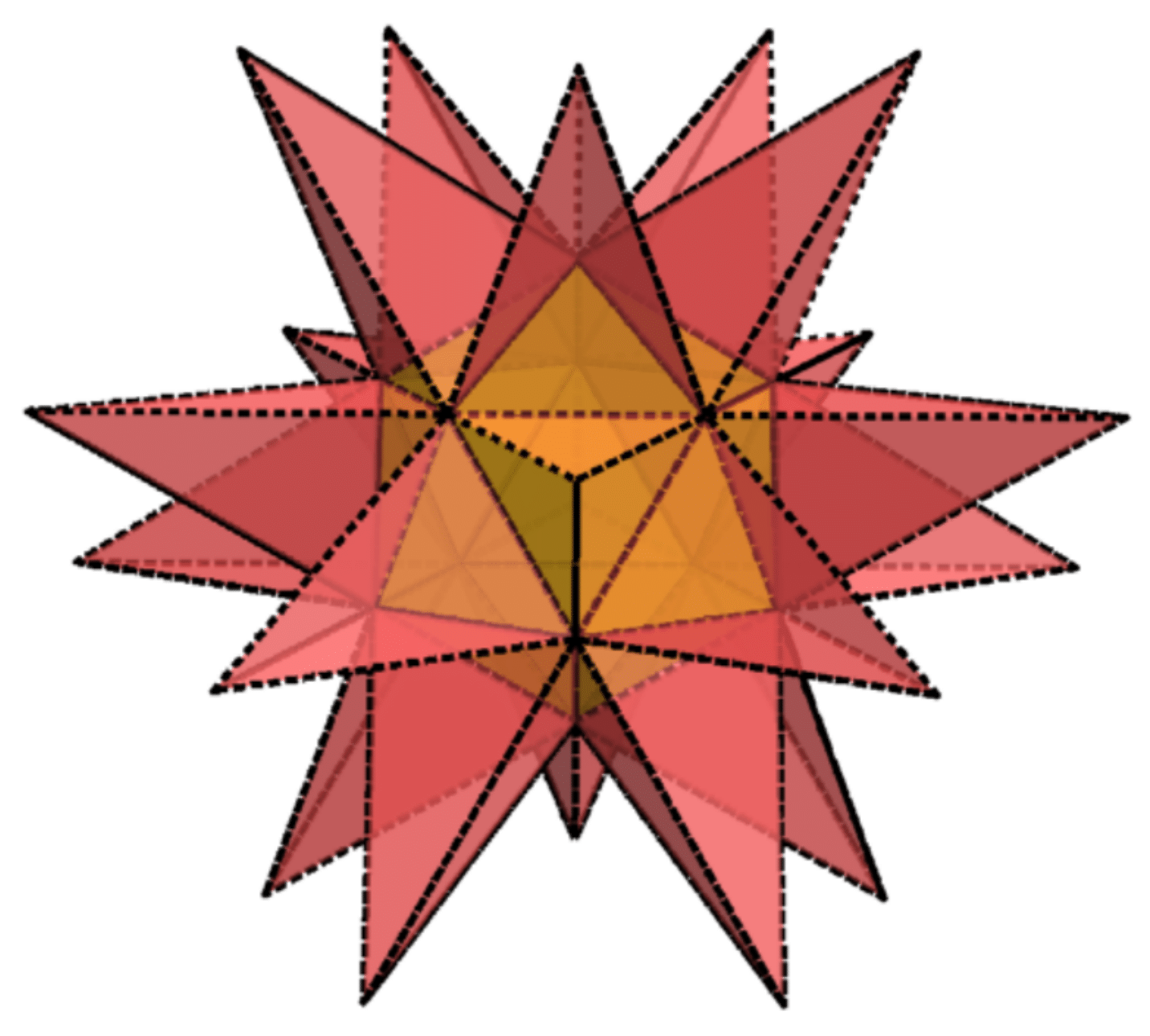}} \\
                (\textit{a}) & & (\textit{b}) \\
                \\[1mm]
                \parbox[c]{1.7in}{\includegraphics[width = 1.7in]{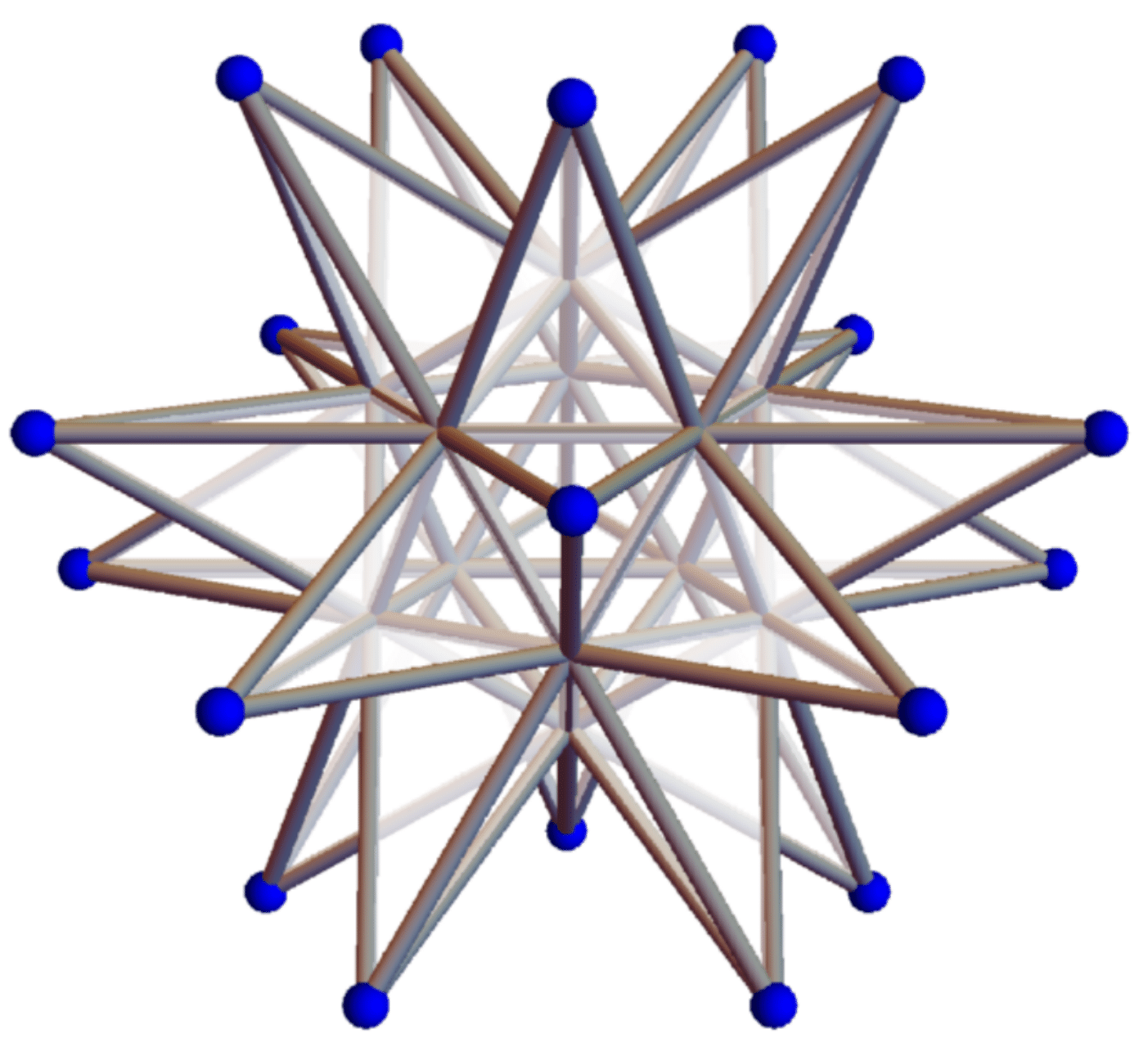}} & & \parbox[c]{1.7in}{\includegraphics[width = 1.7in]{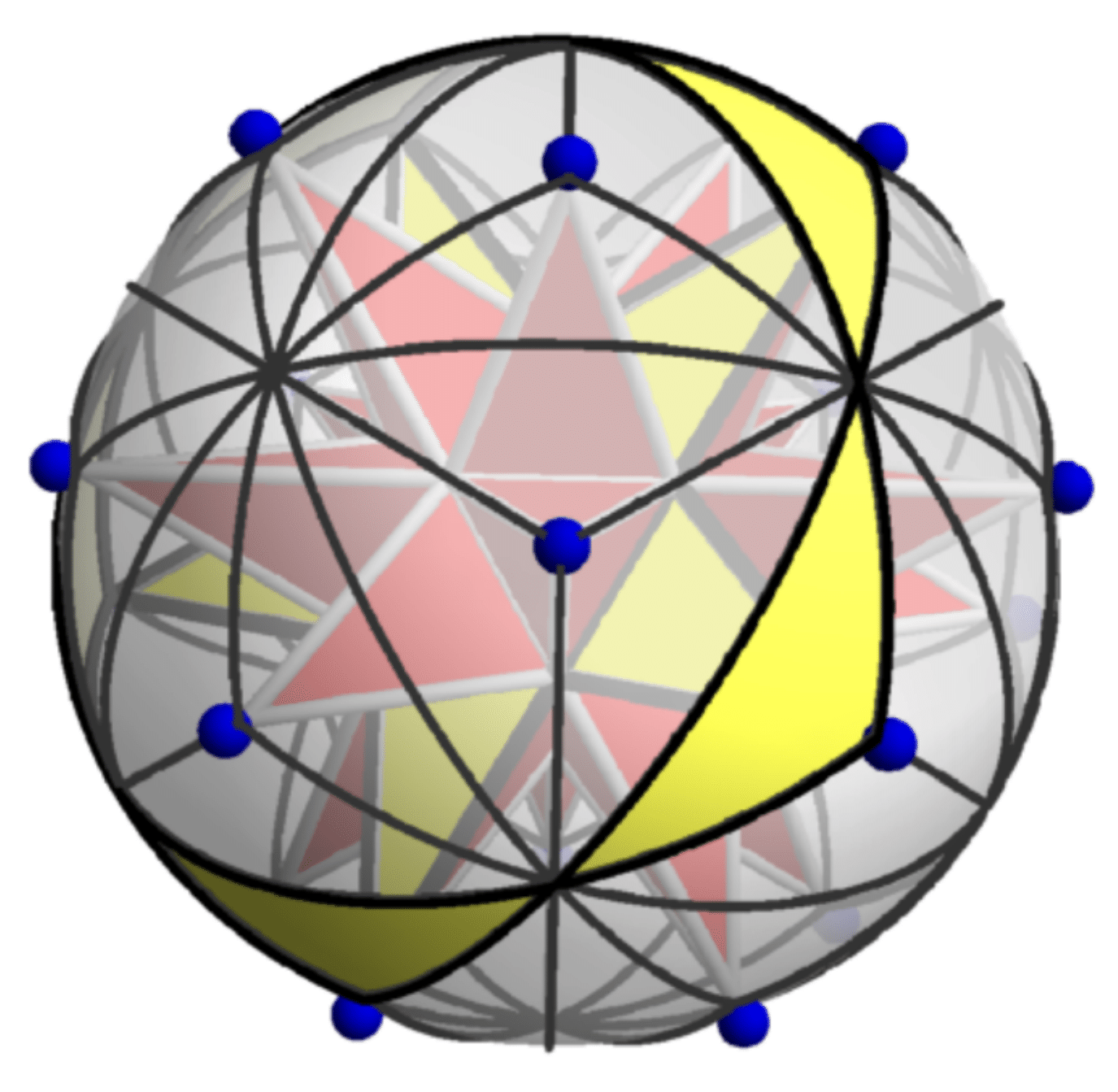}} \\
                (\textit{c}) & & (\textit{d}) \\
            \end{tabular}
            \caption{(\textit{a}) Base geometric vertex, edge, facet of $\rho_{\text{st}}(\{5, 3\}^*120)$. (\textit{b}) Base geometric cell of $\rho_{\text{st}}(\{5, 3\}^*120)$ with icosahedral hole. (\textit{c}) Union of the geometric vertices, edges of $\rho_{\text{st}}(\{5, 3\}^*120)$. (\textit{d}) Spherical realization $\rho_{\text{sp}}(\{5, 3\}^*120)$ circumscribing the star realization $\rho_{\text{st}}(\{5, 3\}^*120)$. \label{fig:gsDodecahedron}}
        \end{figure}

    \begin{illustration}
        We now illustrate the use of Theorem \ref{thm:WythoffConstruction} to create a realization $\rho_{\text{st}}$ of $\{5, 3\}^*120$ with automorphism group $\Gamma = H_3$ generated by the triple $T$ consisting of $t_0 = s_2$, $t_1 = s_1$, $t_2 = s_0$. Employing the representation $\varphi_2$, we have the following generating matrices for $G = \varphi_2(\Gamma)$:
        \[
            \varphi_2(t_0) =
            \begin{bmatrix}
                1 & 0 & 0 \\
                0 & -1 & 0 \\
                0 & 0 & 1 \\
            \end{bmatrix}, \;\;
            \varphi_2(t_1) =
            \frac{1}{2}
            \begin{bmatrix}
                1 & -\sigma & -\tau \\
                -\sigma & \tau & 1 \\
                -\tau & 1 & \sigma \\
            \end{bmatrix}, \;\;
            \varphi_2(t_2) =
            \begin{bmatrix}
                -1 & 0 & 0 \\
                0 & 1 & 0 \\
                0 & 0 & 1 \\
            \end{bmatrix}
        \]
        These three generators correspond to reflections of $\mathbb{E}^3$ with the first and third having the $xz$-plane and $yz$-plane, respectively, as mirrors.

        For each $0 \leq i \leq 3$, we use GAP to generate a complete list of right coset representatives $\gammaij{i}{j}$ of $\Gamma_i$, where $1 \leq j \leq [\Gamma : \Gamma_i]$, and their corresponding matrix representations $\varphi_2(\gammaij{i}{j})$.

        By formula \eqref{eq:WythoffDimension}, we obtain $\dim{\mathcal{W}(\varphi, \; (\Gamma, \: T))} = 1$. We compute the Wythoff space by finding a basis for the intersection of the 1-eigenspaces of $\varphi_2(t_1)$ and $\varphi_2(t_2)$. Using the Zassenhaus algorithm yields $\mathcal{W}(\varphi_2, \; (\Gamma, \; T)) = \text{span} \{(0, 1, 1 + \sigma)\} \subseteq \mathbb{E}^3$.

        As explained earlier, we construct the base geometric $i$-face $\Oij{i}{1}$ for $1 \leq i \leq 3$ taking into account not only $\Oij{i - 1}{1}$, but also the realizations $\Oij{i - 1}{j}$ of the $(i - 1)$-faces $\gammaij{i - 1}{j}$ incident to $\gammaij{i}{1}$. This ensures that, at each stage, $\Oij{i}{1}$ is bounded by these $\Oij{i - 1}{j}$'s as required by the definition of a realization. In addition, $\Oij{i}{1}$ must be chosen carefully so that its stabilizer is $G_i$.
        
        \begin{itemize}
            \item Base geometric vertex: Pick the point $(0, 1, 1 + \sigma)$ in the Wythoff space and let this be $\Oij{0}{1}$.
            \item Base geometric edge: Aside from $\gammaij{0}{1} = e$, only the vertex $\gammaij{0}{2} = t_0$ is incident to the base edge $\gammaij{1}{1} = e$. We define $\Oij{1}{1}$ to be the open line segment (Fig. \ref{fig:gsDodecahedron}(\textit{a})) whose endpoints are $\Oij{0}{1}$ and $\Oij{0}{2} = \im{\varphi_2(t_0)}{\Oij{0}{1}} = (0, -1, 1 + \sigma)$. This segment is stabilized by $G_1$ with $\varphi_2(t_0)$ interchanging these endpoints and $\varphi_2(t_2)$ fixing them.
            \item Base geometric facet: There are 5 edges incident to the base facet $\gammaij{2}{1} = e$. These are $\gammaij{1}{1} = e$, $\gammaij{1}{2} = t_0t_1$, $\gammaij{1}{3} = (t_0t_1)^2$, $\gammaij{1}{4} = t_1t_0t_1$, $\gammaij{1}{5} = t_0t_1$. We define $\Oij{2}{1}$ to be the open regular pentagram (Fig. \ref{fig:gsDodecahedron}(\textit{a})) bounded by the segments $\Oij{1}{j} = \im{\varphi_2(\gammaij{1}{j})}{\Oij{1}{1}}$ for $1 \leq j \leq 5$ with endpoints $\Oij{0}{1}$, $\Oij{0}{2}$, $\Oij{0}{3} = (\sigma, -\sigma, \sigma)$, $\Oij{0}{4} = (1 + \sigma, 0, 1)$, $\Oij{0}{5} = (\sigma, \sigma, \sigma)$ as shown in the figure. It is straightforward to verify that this pentagram is stabilized by $G_2$ with $\varphi_2(t_0)$ fixing $\Oij{1}{1}$, $\varphi_2(t_1)$ fixing $\Oij{1}{3}$, and either permuting the remaining segments.
            \item Base geometric cell: There are 12 facets incident to the base cell $\gammaij{3}{1} = e$. These are $\gammaij{2}{1} = e$, $\gammaij{2}{2} = t_2$, $\gammaij{2}{3} = t_1t_2$, $\gammaij{2}{4} = t_0t_1t_2$, $\gammaij{2}{5} = t_1t_0t_1t_2$, $\gammaij{2}{6} = t_2t_1t_0t_1t_2$, $\gammaij{2}{7} = (t_0t_1)^2t_2$, $\gammaij{2}{8} = t_0t_2t_1t_0t_1t_2$, $\gammaij{2}{9} = t_1t_0t_2t_1t_0t_1t_2$, $\gammaij{2}{10} = t_0t_1t_0t_2t_1t_0t_1t_2$, $\gammaij{2}{11} = (t_1t_0)^2t_2t_1t_0t_1t_2$, $\gammaij{2}{12} = t_2(t_1t_0)^2t_2t_1t_0t_1t_2$. We define $\mathcal{O}_3$ to be the open region (Fig. \ref{fig:gsDodecahedron}(\textit{b})) bounded by the open pentagrams $\Oij{2}{j} = \im{\varphi_2(\gammaij{2}{j})}{\Oij{2}{1}}$ for $1 \leq j \leq 12$. The region $\mathcal{O}_3$ is the disjoint union of 20 open triangular pyramids whose bases form the bounding surface of a regular \textit{icosahedron}. We can thus informally describe $\mathcal{O}_3$ as an open ``spiky'' solid with an icosahedral hole at its core. It will follow that $\mathcal{O}_3$ is stabilized by $G$ after verifying that each generator of $G$ either fixes a bounding pentagram or sends it to another one.
        \end{itemize}

        The resulting geometric polyhedron $\rho_{\text{st}}(\{5, 3\}^*120)$ is obtained by getting the union of the geometric vertices, edges in Fig. \ref{fig:gsDodecahedron}(\textit{c}) and the geometric facets, cell in Fig. \ref{fig:gsDodecahedron}(\textit{b}).
        \label{ill:realization53}
    \end{illustration}

    \subsection{Geometric faces}
    \label{subsec:geometricFaces}

    Here we describe four different families of realizations -- \textit{spherical}, \textit{convex}, \textit{star}, \textit{skew} -- classified according to the geometry and relative arrangements of their associated open sets. These were chosen to demonstrate the capability of Theorem \ref{thm:WythoffConstruction} to later produce a realization for each of the regular $H_3$-polyhedra in Table \ref{tbl:H3-polyhedra}. It is important to note that other families of open sets may also be chosen and the four enumerated here are by no means the only options available.

    \subsubsection{Spherical realization}
    \label{subsubsec:spherical}

    Since orthogonal matrices are isometric, a sphere is a natural space for a geometric polyhedron to inhabit. For a \textit{spherical realization} denoted by $\rho_{\text{sp}}$, we define the base geometric vertex as a point on the surface of a fixed sphere; the base geometric edge as an open spherical arc; the base geometric facet as an open spherical polygon; and the geometric cell as the sphere's interior. Observe that the geometric faces  excluding the cell tile the surface of the sphere. Thus, we may regard a spherical realization as a covering of the surface of a sphere by spherical polygons.

    \subsubsection{Convex and star realizations}
    \label{subsubsec:convexStar}

    Suppose that, in a spherical realization, we set the base geometric edge to be an open line segment instead of a spherical arc. Provided that the resulting bounding edges of the base geometric facet are coplanar, we may define a \textit{classical realization} which is either \textit{convex} and denoted by $\rho_{\text{co}}$ or \textit{star} and denoted by $\rho_{\text{st}}$. If a pair of edges (resp. facets) intersect, we set the base geometric facet (resp. cell) to be the union of disconnected open regions bounded by its incident edges (resp. facets). Otherwise, we define it as the interior of the convex hull of these edges (resp. facets).

    The presence of intersecting edges or facets characterizes a star realization. That is, the resulting star polyhedron is \textit{polymorphic} and has a cell which generally consists of the union of two or more distinct open regions in space (Johnson, 2008).

    The convexity of the resulting geometric polyhedron, on the other hand, characterizes a convex realization. That is, a convex polyhedron is a solid where each geometric $i$-face is the interior of the convex hull of its bounding geometric $(i - 1)$-faces.
    
    \begin{figure}[!htb]
        \begin{tabular}{ccc}
            \parbox[c]{1.7in}{\includegraphics[width = 1.7in]{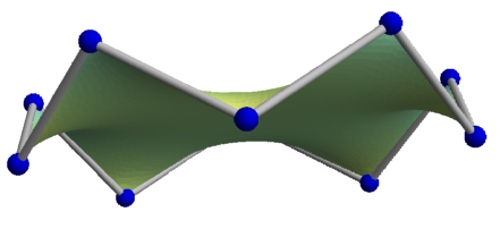}} & \parbox[c]{1.7in}{\includegraphics[width = 1.7in]{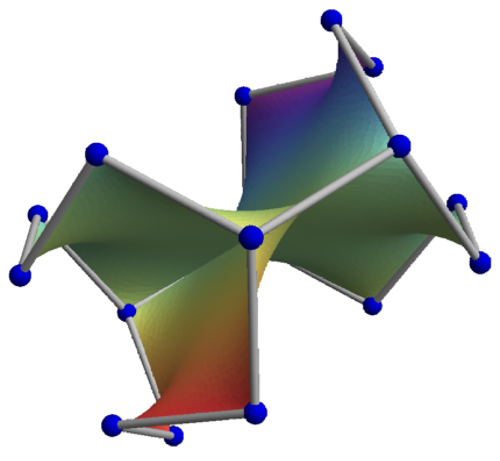}} & \parbox[c]{1.7in}{\includegraphics[width = 1.7in]{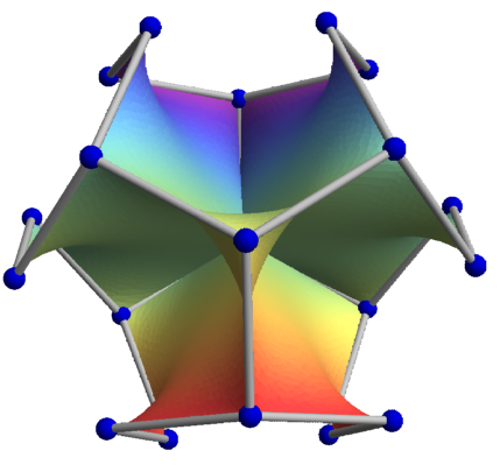}} \\
            (\textit{a}) & (\textit{b}) & (\textit{c}) \\
        \end{tabular}
        \caption{(\textit{a}) Base geometric facet of $\rho_{\text{sk}}(\{10, 3\}^*120_b)$ with (\textit{b}) one and (\textit{c}) two of its symmetric copies, bounding a region in space with non-zero volume. \label{fig:103skewFacets}}
    \end{figure}
	
    \subsubsection{Skew realization}
    \label{subsubsec:skew}

    Consider the scenario in which the geometric edges are open line segments as in a convex or a star realization, but the resulting bounding edges of the base geometric facet are non-coplanar. In this case, we set the base geometric facet to be the interior of the \textit{minimal surface} (local area-minimizing surface) obtained by solving Plateau's problem on the facet's bounding edges (Hass, 1991). A physical model of this minimal surface is the soap film obtained by dipping a wire frame bent in the shape of the base facet's boundary into a soap solution. This gives rise to what we now refer to as a \textit{skew realization} $\rho_{\text{sk}}$. Such a realization results to a polyhedron with facets that are curved as opposed to planar.

\section{Regular geometric $H_3$-polyhedra}
\label{sec:regularGeometricH3Polyhedra}

    The method discussed in Theorem \ref{thm:WythoffConstruction} allows one to reproduce the spherical and classical realizations of the regular abstract $H_3$-polyhedra and lets one construct non-standard realizations.

    Applying formula \eqref{eq:WythoffDimension} to the string C-groups in Table \ref{tbl:H3-polyhedra} yields six abstract polyhedra with non-zero Wythoff dimension: $\{3, 5\}^*120$, $\{5, 3\}^*120$, $\{5, 5\}^*120$, $\{6, 5\}^*120_c$, $\{10, 3\}^*120_b$, $\{10, 5\}^*120_b$. These realizable polyhedra have a 1-dimensional Wythoff space for either representation $\varphi_1$, $\varphi_2$ and, consequently, will give rise to 12 spherical and 12 non-spherical (convex, star, or skew) realizations. The resulting geometric polyhedra are rendered as solid figures using Wolfram Mathematica (2018) and presented in Tables \ref{tbl:35GeometricPolyhedra}--\ref{tbl:55GeometricPolyhedra}, \ref{tbl:65GeometricPolyhedra}--\ref{tbl:105GeometricPolyhedra}. The number of vertices $v$, edges $e$, and facets $f$ of these polyhedra are also indicated in the tables.

\begin{table}[!htb]
        \begin{tabular}{c|ccc}
            \hline
            $\varphi_{i}$ & $\rho_{\text{sp}}$ & $\rho_{\text{co}}/\rho_{\text{st}}$ \\
            \hline
            \\[1mm]
            $\varphi_{1}$ & \parbox[c]{1.3in}{\includegraphics[width = 1.3in]{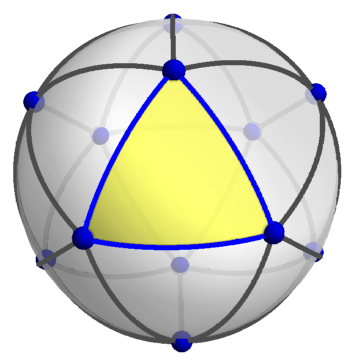}} & \parbox[c]{1.3in}{\includegraphics[width = 1.3in]{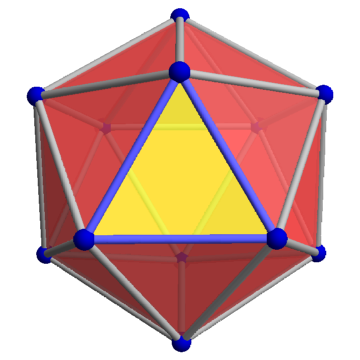}} \\
            & (\textit{a}) spherical icosahedron & (\textit{b}) convex icosahedron \\
            \hline
            \\[1mm]
            $\varphi_{2}$ & \parbox[c]{1.3in}{\includegraphics[width = 1.3in]{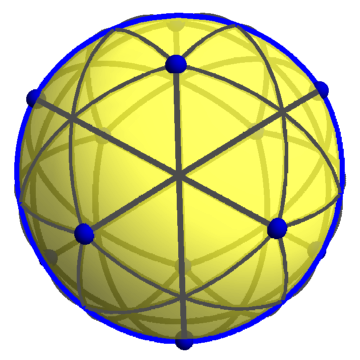}} & \parbox[c]{1.3in}{\includegraphics[width = 1.3in]{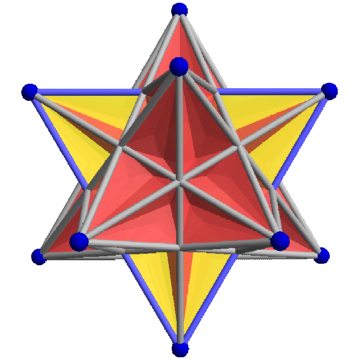}} \\
            & (\textit{c}) spherical great icosahedron & (\textit{d}) star great icosahedron \\
            \hline
        \end{tabular}
        \caption{Full rank geometric realizations of $\{3, 5\}^*120$ ($v = 12$, $e = 30$, $f = 20$) with base facet and its boundary highlighted. \label{tbl:35GeometricPolyhedra}}
    \end{table}
    
\begin{table}[!htb]
\noindent\makebox[\textwidth]{
        \begin{tabular}{c|ccc}
            \hline
            $\varphi_{i}$ & $\rho_{\text{sp}}$ & $\rho_{\text{co}}/\rho_{\text{st}}$ \\
            \hline
            \\[1mm]
            $\varphi_{1}$ & \parbox[c]{1.3in}{\includegraphics[width = 1.3in]{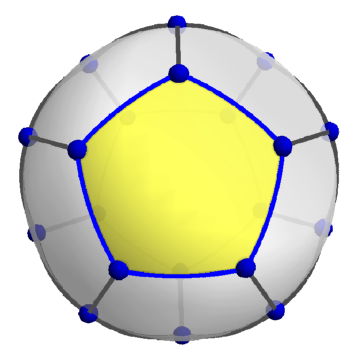}} & \parbox[c]{1.3in}{\includegraphics[width = 1.3in]{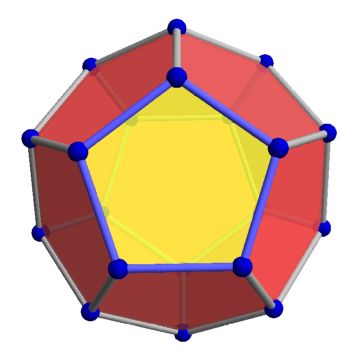}} \\
            & (\textit{a}) spherical dodecahedron & (\textit{b}) convex dodecahedron \\
            \hline
            \\[1mm]
            $\varphi_{2}$ & \parbox[c]{1.3in}{\includegraphics[width = 1.3in]{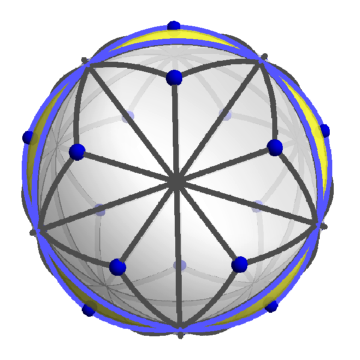}} & \parbox[c]{1.3in}{\includegraphics[width = 1.3in]{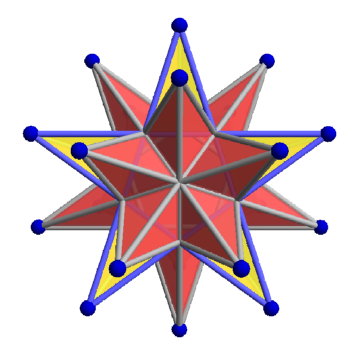}} \\
            & (\textit{c}) spherical great & (\textit{d}) star great \\
            & stellated dodecahedron & stellated dodecahedron \\
            \hline
        \end{tabular}}
        \caption{Full rank geometric realizations of $\{5, 3\}^*120$ ($v = 20$, $e = 30$, $f = 12$) with base facet and its boundary highlighted. \label{tbl:53GeometricPolyhedra}}
    \end{table}
    
\begin{table}[!htb]
\noindent\makebox[\textwidth]{
        \begin{tabular}{c|ccc}
            \hline
            $\varphi_{i}$ & $\rho_{\text{sp}}$ & $\rho_{\text{co}}/\rho_{\text{st}}$ \\
            \hline
            \\[1mm]
            $\varphi_{1}$ & \parbox[c]{1.3in}{\includegraphics[width = 1.3in]{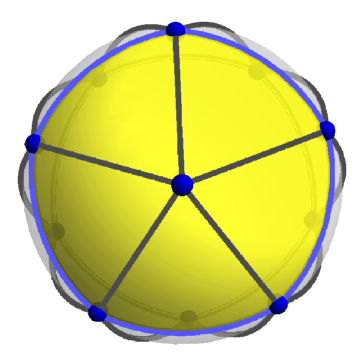}} & \parbox[c]{1.3in}{\includegraphics[width = 1.3in]{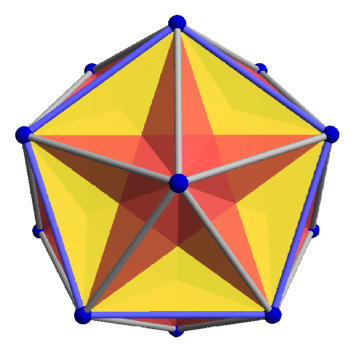}} \\
            & (\textit{a}) spherical great & (\textit{b}) star great \\
            & dodecahedron & dodecahedron \\
            \hline
            \\[1mm]
            $\varphi_{2}$ & \parbox[c]{1.3in}{\includegraphics[width = 1.3in]{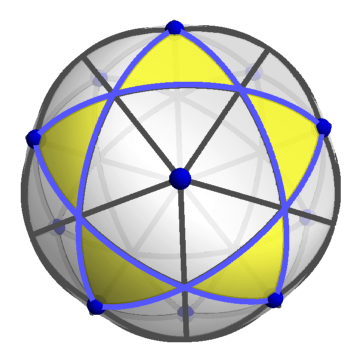}} & \parbox[c]{1.3in}{\includegraphics[width = 1.3in]{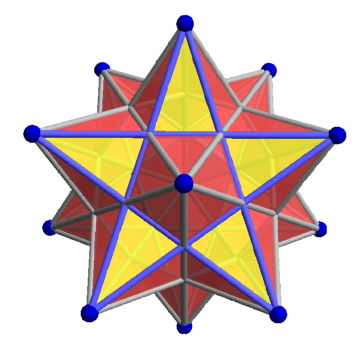}} \\
            & (\textit{c}) spherical small & (\textit{d}) star small \\
            & stellated dodecahedron & stellated dodecahedron \\
            \hline
        \end{tabular}}
        \caption{Full rank geometric realizations of $\{5, 5\}^*120$ ($v = 12$, $e = 30$, $f = 12$) with base facet and its boundary highlighted. \label{tbl:55GeometricPolyhedra}}
    \end{table}

\begin{table}[!htb]
\noindent\makebox[\textwidth]{
        \begin{tabular}{c|ccc}
            \hline
            $\varphi_{i}$ & $\{6, 5\}^{*}120_c$ & $\{10, 3\}^{*}120_b$ & $\{10, 5\}^{*}120_b$ \\
            \hline
            \\[1mm]
            $\varphi_{1}$ & \parbox[c]{0.8in}{\includegraphics[width = 0.8in]{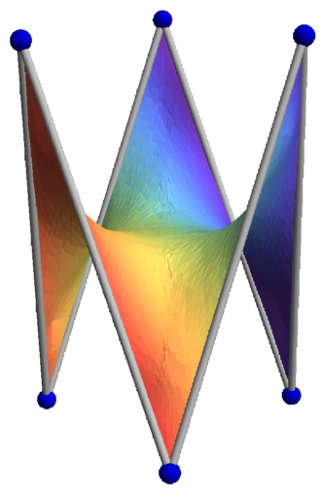}} & \parbox[c]{1.2in}{\includegraphics[width = 1.2in]{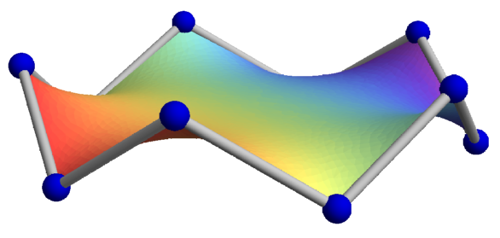}} & \parbox[c]{1in}{\includegraphics[width = 1in]{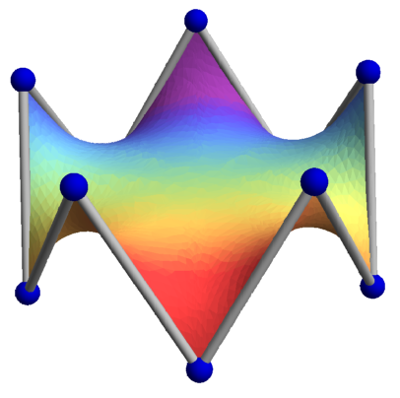}} \\
            & (\textit{a}) & (\textit{b}) & (\textit{c}) \\
            \hline
            \\[1mm]
            $\varphi_{2}$ & \parbox[c]{1.2in}{\includegraphics[width = 1.2in]{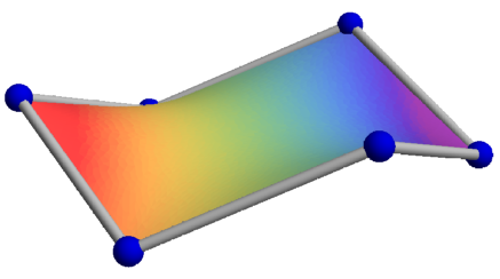}} & \parbox[c]{0.9in}{\includegraphics[width = 0.9in]{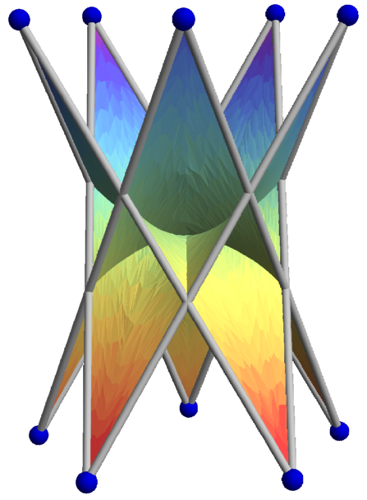}} & \parbox[c]{1.2in}{\includegraphics[width = 1.2in]{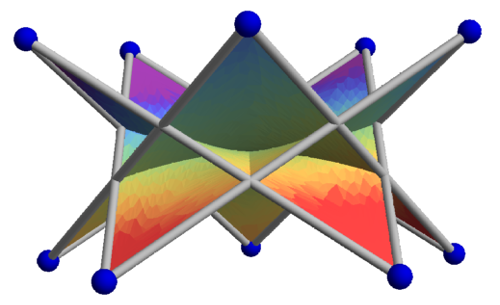}} \\
            & (\textit{d}) & (\textit{e}) & (\textit{f}) \\
            \hline
        \end{tabular}}
        \caption{Full rank geometric base facets of the skew realizations of $\{6, 5\}^{*}120_c$, $\{10, 3\}^{*}120_b$, $\{10, 5\}^{*}120_b$. \label{tbl:skewFacets}}
    \end{table}

    The spherical realizations correspond to covers of the unit sphere by spherical projections of planar triangles, pentagons, pentagrams, skew hexagons, and skew decagons. Some of these projected polygons cover the sphere only once (see Tables \ref{tbl:35GeometricPolyhedra}(\textit{a}), \ref{tbl:53GeometricPolyhedra}(\textit{a}), \ref{tbl:53GeometricPolyhedra}(\textit{c}), \ref{tbl:55GeometricPolyhedra}(\textit{c})) and, hence, generate a regular spherical tessellation.

    The classical realizations consist of two convex polyhedra: the \textit{icosahedron} (Table \ref{tbl:35GeometricPolyhedra}(\textit{b})) and the \textit{dodecahedron} (Table \ref{tbl:53GeometricPolyhedra}(\textit{b})) with a triangle and a pentagon, respectively, as facet; and four star polyhedra: the \textit{great icosahedron} (Table \ref{tbl:35GeometricPolyhedra}(\textit{d})), the \textit{great stellated dodecahedron} (Table \ref{tbl:53GeometricPolyhedra}(\textit{d})), the \textit{great dodecahedron} (Table \ref{tbl:55GeometricPolyhedra}(\textit{b})), and the \textit{small stellated dodecahedron} (Table \ref{tbl:55GeometricPolyhedra}(\textit{d})) with a triangle, a pentagram, a pentagon, and a pentagram, respectively, as a facet. These star polyhedra are also referred to as the \textit{stellations} of the convex icosahedron and dodecahedron and may be constructed alternatively by extending the facets of the latter until they intersect and form the facets of the former.

    To illustrate the similarities and differences between a spherical and a classical realization, we take the polyhedron $\{5, 3\}^*120$ and embed its realization under $\rho_{\text{st}}$ in Illustration \ref{ill:realization53} into its realization under $\rho_{\text{sp}}$. We present the embedded figures in Fig. \ref{fig:gsDodecahedron}(\textit{d}). We also highlighted the planar pentagram facet in the star polyhedron and its projection on the unit sphere in the spherical polyhedron. Notice how the edges in both polyhedra intersect at points which do not correspond to vertices.

    None of $\{6, 5\}^*120_c$, $\{10, 3\}^*120_b$, $\{10, 5\}^*120_b$ admit a convex or a star realization since their base geometric facets have non-coplanar bounding edges. By implementing a simple numerical iterative algorithm based on finite element method, we obtain a minimal surface as base facet of each of these polyhedra. For instance, when this algorithm is applied to $\{10, 3\}^*120_b$, we obtain the skew decagon facet in Fig. \ref{fig:103skewFacets} and the geometric polyhedron in Table \ref{tbl:103GeometricPolyhedra}(\textit{b}). The other five geometric realizations are displayed in Tables \ref{tbl:65GeometricPolyhedra}, \ref{tbl:103GeometricPolyhedra}, \ref{tbl:105GeometricPolyhedra}. Their facets, which are either skew hexagons or skew decagons, are shown in Table \ref{tbl:skewFacets}.
    
\begin{table}[!htb]
        \begin{tabular}{c|ccc}
            \hline
            $\varphi_{i}$ & $\rho_{\text{sp}}$ & $\rho_{\text{sk}}$ \\
            \hline
            \\[1mm]
            $\varphi_{1}$ & \parbox[c]{1.3in}{\includegraphics[width = 1.3in]{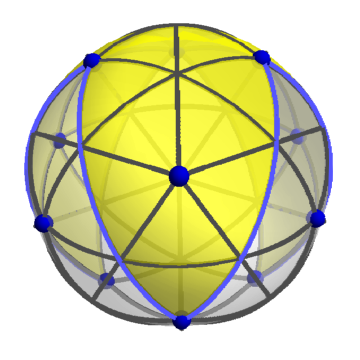}} & \parbox[c]{1.3in}{\includegraphics[width = 1.3in]{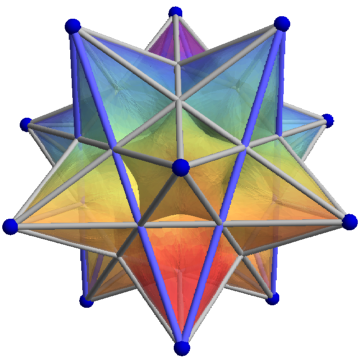}} \\
            & (\textit{a}) & (\textit{b}) \\
            \hline
            \\[1mm]
            $\varphi_{2}$ & \parbox[c]{1.3in}{\includegraphics[width = 1.3in]{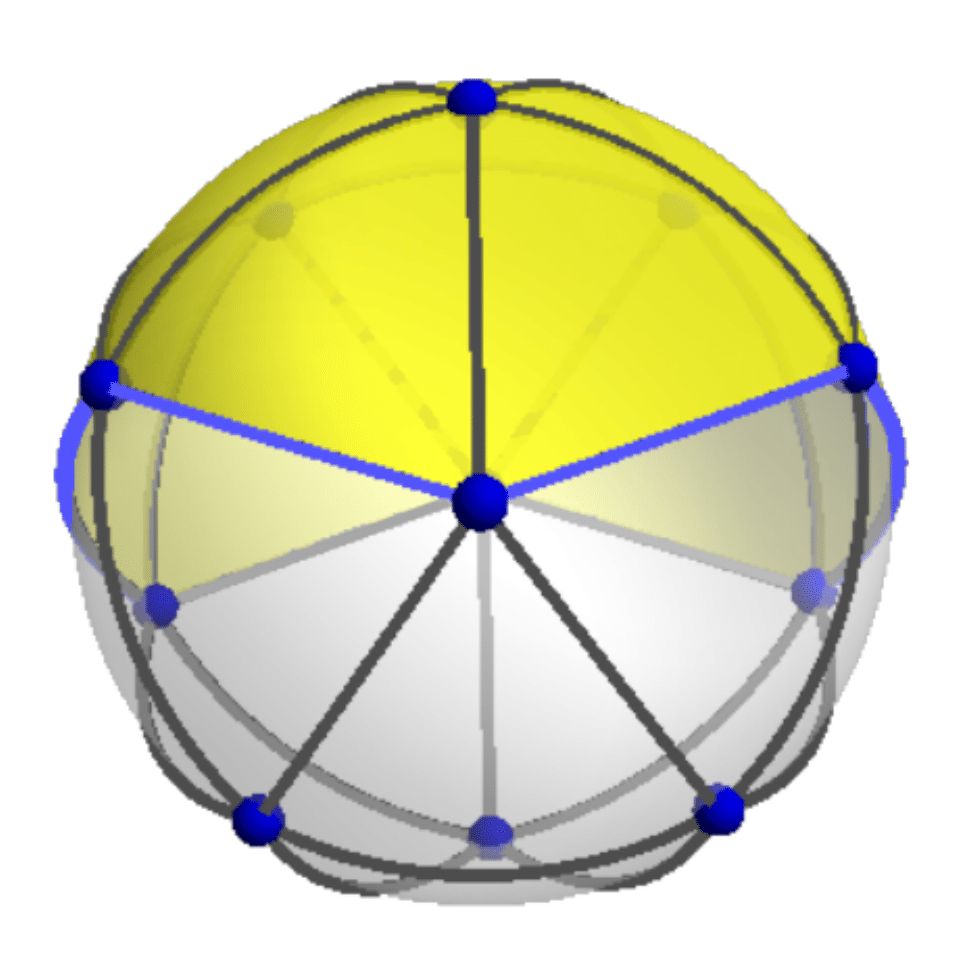}} & \parbox[c]{1.3in}{\includegraphics[width = 1.3in]{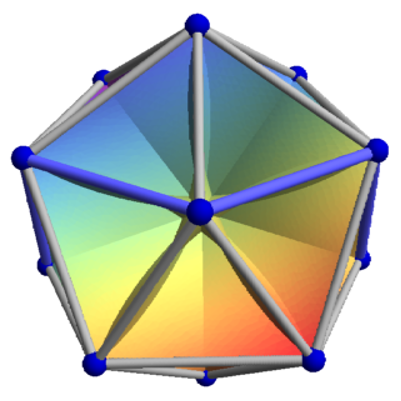}} \\
            & (\textit{c}) & (\textit{d}) \\
            \hline
        \end{tabular}
        \caption{Full rank geometric realizations of $\{6, 5\}^{*}120_c$ ($v = 12$, $e = 30$, $f = 10$) with base facet and its boundary highlighted. \label{tbl:65GeometricPolyhedra}}
    \end{table}

\begin{table}[!htb]
        \begin{tabular}{c|ccc}
            \hline
            $\varphi_{i}$ & $\rho_{\text{sp}}$ & $\rho_{\text{sk}}$ \\
            \hline
            \\[1mm]
            $\varphi_{1}$ & \parbox[c]{1.3in}{\includegraphics[width = 1.3in]{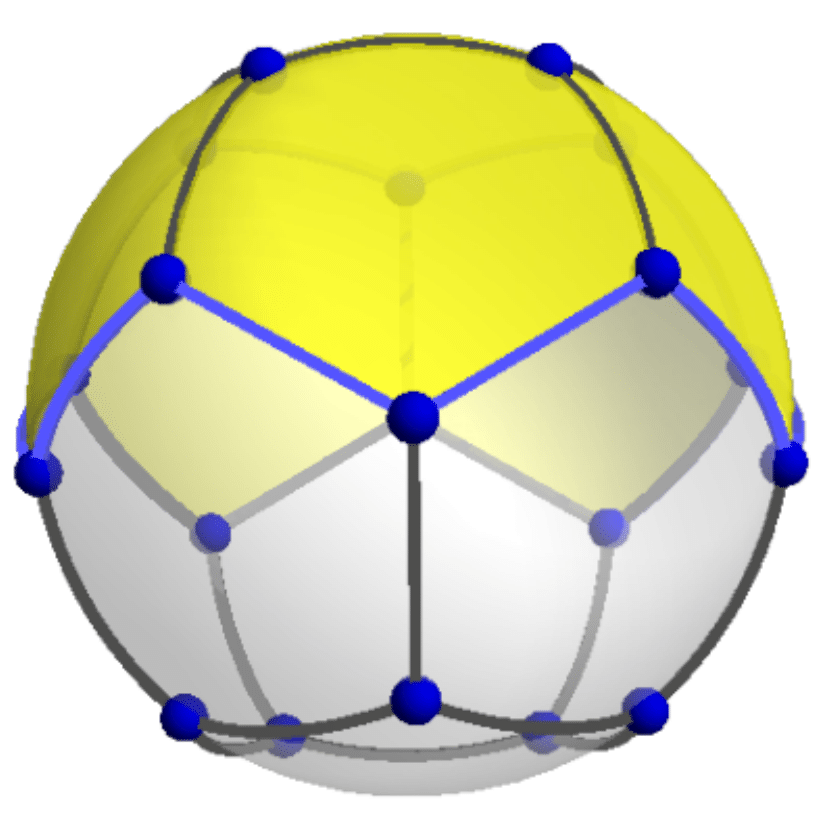}} & \parbox[c]{1.3in}{\includegraphics[width = 1.3in]{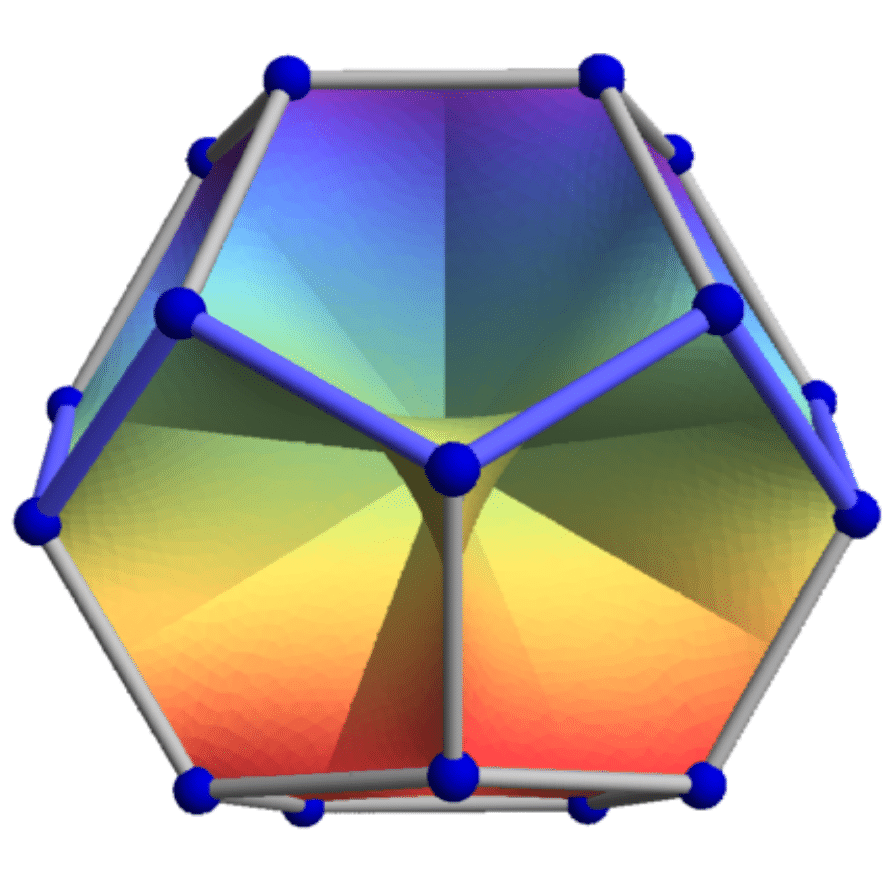}} \\
            & (\textit{a}) & (\textit{b}) \\
            \hline
            \\[1mm]
            $\varphi_{2}$ & \parbox[c]{1.3in}{\includegraphics[width = 1.3in]{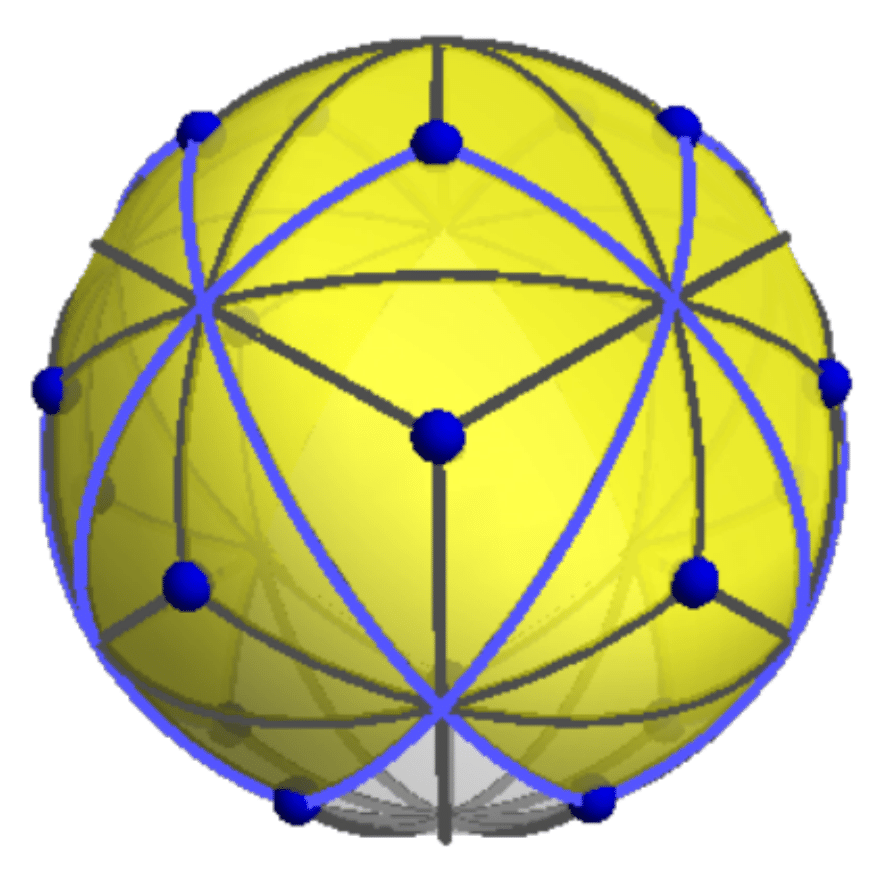}} & \parbox[c]{1.3in}{\includegraphics[width = 1.3in]{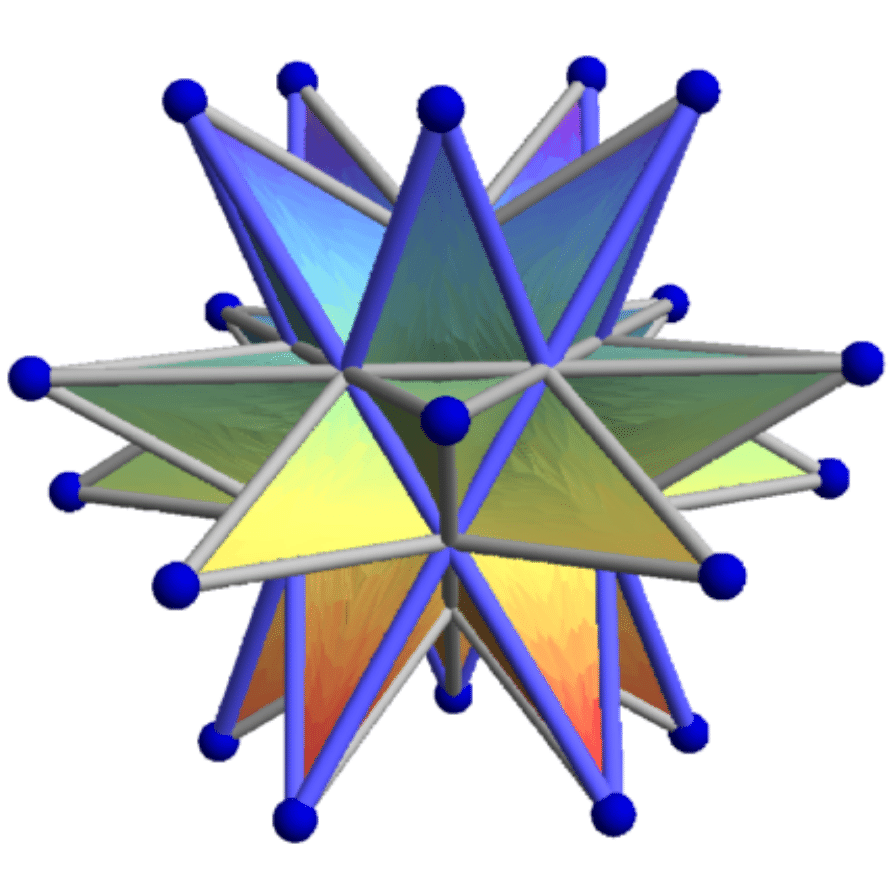}} \\
            & (\textit{c}) & (\textit{d}) \\
            \hline
        \end{tabular}
        \caption{Full rank geometric realizations of $\{10, 3\}^{*}120_b$ ($v = 20$, $e = 30$, $f = 6$) with base facet and its boundary highlighted. \label{tbl:103GeometricPolyhedra}}
    \end{table}

\begin{table}[!htb]
        \begin{tabular}{c|ccc}
            \hline
            $\varphi_{i}$ & $\rho_{\text{sp}}$ & $\rho_{\text{sk}}$ \\
            \hline
            \\[1mm]
            $\varphi_{1}$ & \parbox[c]{1.3in}{\includegraphics[width = 1.3in]{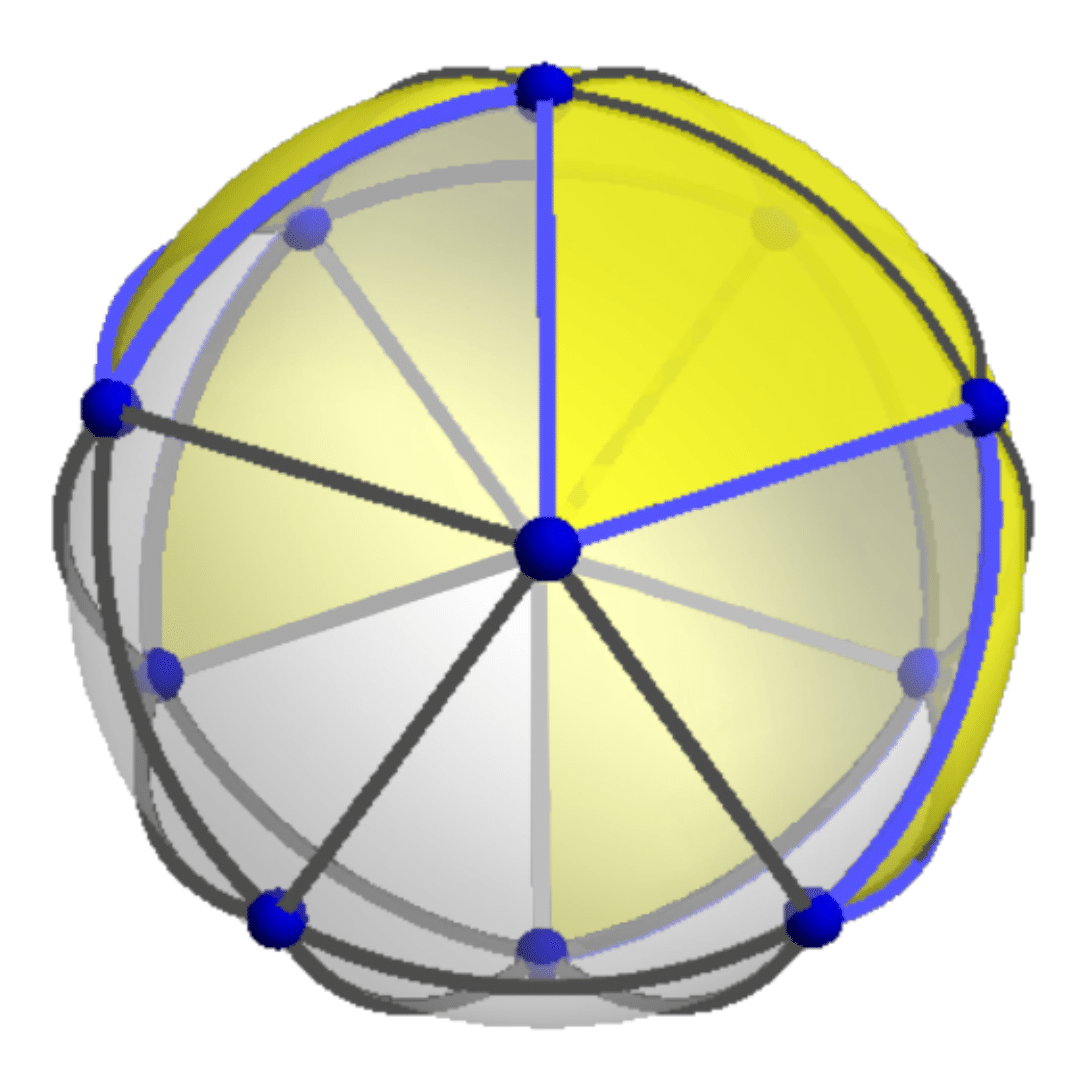}} & \parbox[c]{1.3in}{\includegraphics[width = 1.3in]{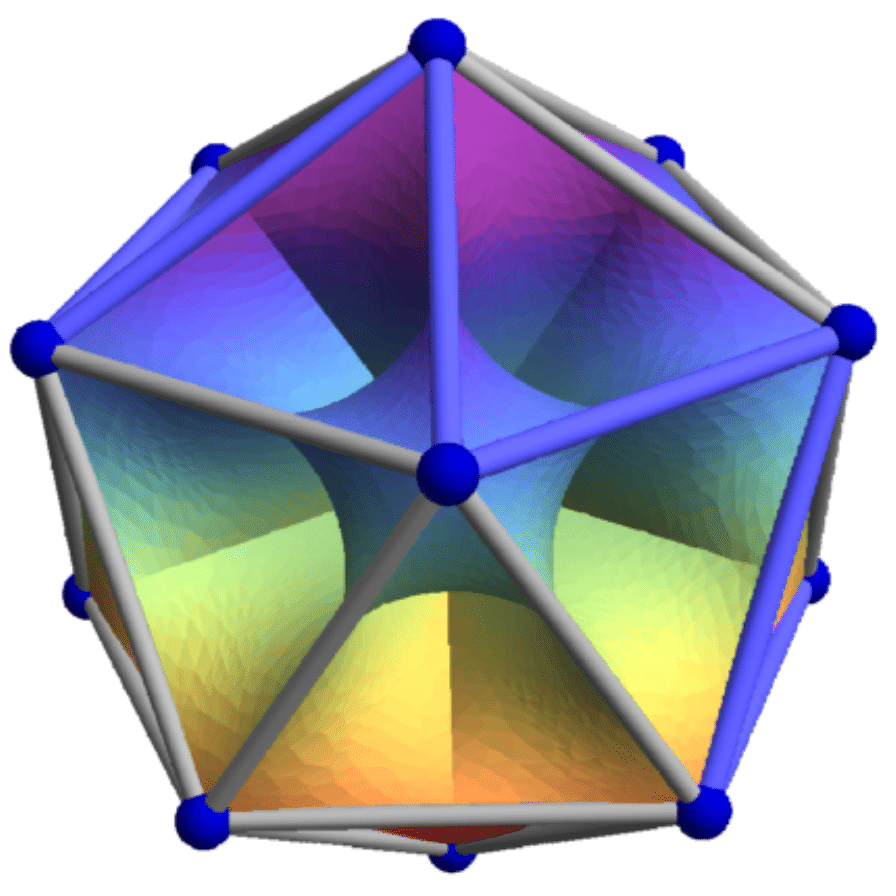}} \\
            & (\textit{a}) & (\textit{b}) \\
            \hline
            \\[1mm]
            $\varphi_{2}$ & \parbox[c]{1.3in}{\includegraphics[width = 1.3in]{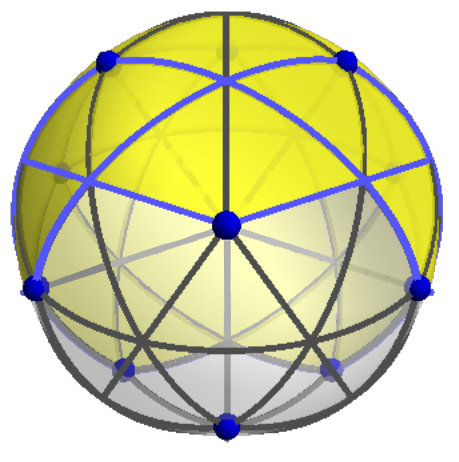}} & \parbox[c]{1.3in}{\includegraphics[width = 1.3in]{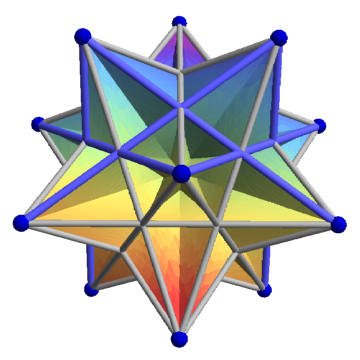}} \\
            & (\textit{c}) & (\textit{d}) \\
            \hline
        \end{tabular}
        \caption{Full rank geometric realizations of $\{10, 5\}^{*}120_b$ ($v = 12$, $e = 30$, $f = 6$) with base facet and its boundary highlighted. \label{tbl:105GeometricPolyhedra}}
    \end{table}

\section{Conclusion and future outlook}
\label{sec:conclusion}

    This study demonstrated a method to produce a full rank geometric realization of a regular abstract polyhedron. Existing works on realizations emphasize their algebraic aspects. For instance, the articles of McMullen (1989, 2011), McMullen \& Monson (2003), and Ladisch (2016) focus on the structure of the \textit{realization cone} or the set of all realizations of a given polytope up to congruence. In contrast, we highlighted their geometric aspects by identifying the realizations with their images as solid figures in space.

    Adapted from Wythoff construction, the method presented in this study is algorithmic in nature and, hence, well-suited for computer implementation. This was exhibited when we applied the method to regular abstract polyhedra with automorphism group isomorphic to $H_3$. The entire process involved enumerating the regular abstract $H_{3}$-polyhedra through a search algorithm in GAP and using the irreducible orthogonal representations of $H_3$ to generate the corresponding figures of their geometric realizations in Wolfram Mathematica. This allowed us to reproduce the classical convex and star polyhedra  with icosahedral symmetry, as well as, non-standard icosahedral polyhedra with minimal surfaces as facets. We reiterate that we do not limit ourselves to the families of open sets listed in \S\ref{subsec:geometricFaces} when considering a realization.

    We remark that even though we applied the method only to the regular abstract $H_3$-polyhedra, it is also applicable to other regular polyhedra and may be extended to polytopes of higher ranks. In particular, one may apply the method to regular polyhedra arising from the groups $A_n$, $B_n$, and $H_n$ (Humphreys, 1992). For future work, it is worthwhile to consider establishing an analogous construction method for the realizations of \textit{semi-regular} abstract polytopes using a version of Wythoff construction found in Monson \& Schulte (2012) as a framework.

\bigskip

\bigskip

{\it Jonn Angel L. Aranas, Department of Mathematics, Ateneo de Manila University, Katipunan Avenue, Loyola Heights, Quezon City 1108, Philippines. E-mail address: jonn.aranas@obf.ateneo.edu}

\medskip

{\it Mark L. Loyola, Department of Mathematics, Ateneo de Manila University, Katipunan Avenue, Loyola Heights, Quezon City 1108, Philippines. E-mail address: mloyola@ateneo.edu}

\end{document}